\newcommand{\be}{\begin{equation}}
	\newcommand{\en}{\end{equation}}
\newcommand{\bea}{\begin{eqnarray}}
	\newcommand{\ena}{\end{eqnarray}}
\newcommand{\beano}{\begin{eqnarray*}}
	\newcommand{\enano}{\end{eqnarray*}}
\newcommand{\bee}{\begin{enumerate}}
	\newcommand{\ene}{\end{enumerate}}
\newtheorem{thm}{Theorem}
\newtheorem{cor}[thm]{Corollary}
\newtheorem{lemma}[thm]{Lemma}
\newtheorem{prop}[thm]{Proposition}
\newtheorem{example}[thm]{Example}
\newenvironment{proof}{\noindent {\bf Proof --}}{\hfill$\square$ \vspace{3mm}\endtrivlist}
\begin{document}
	
	\thispagestyle{empty}
	
	\begin{center}
		{\Large \bf On Positive Vectors in Indefinite Inner Product Spaces}   \vspace{1cm}\\
		
		{\large Fabio Bagarello}\\
		Dipartimento di Ingegneria,
		Universit\`a di Palermo,\\ I-90128  Palermo, Italy\\
		and I.N.F.N., Sezione di Catania\\
		e-mail: fabio.bagarello@unipa.it\\

		\vspace{2mm}
		
		{\large Sergiusz Ku\.{z}el}\\
		Faculty of Applied Mathematics, AGH University, \\ 30-059 Krak\'ow, Poland \\
		e-mail: kuzhel@agh.edu.pl
		\end{center}
\begin{abstract}
\noindent Let $\mathcal{H}$ be a linear space equipped with an indefinite inner product 
$[\cdot, \cdot]$. Denote by $\mathcal{F}_{++}=\{f\in\mathcal{H} \ : \ [f,f]>0\}$
the nonlinear set of positive vectors in $\mathcal{H}$.
We demonstrate that the properties of a linear operator 
$W$ in $\mathcal{H}$ can be uniquely determined by its restriction to 
$\mathcal{F}_{++}$. In particular, we prove that the bijectivity of 
$W$ on $\mathcal{F}_{++}$  is equivalent to $W$ being {\em close} to a unitary operator with respect to $[\cdot, \cdot]$. Furthermore, we consider a one-parameter semi-group of operators $W_+ = \{W(t) : t \geq 0\}$, where each 
$W(t)$ maps $\mathcal{F}_{++}$   onto itself in a one-to-one manner. We show that, under this natural restriction, the semi-group $W_+$  can be transformed into a one-parameter group 
$U = \{U(t) : t\in\mathbb{R}\}$ of operators that are unitary with respect to $[\cdot, \cdot]$.  By imposing additional conditions, we show how to construct a suitable definite inner product $\langle\cdot, \cdot\rangle$, based on $[\cdot, \cdot]$, which guarantees the unitarity of the operators $U(t)$ in the Hilbert space obtained by completing $\mathcal{H}$
 with respect to $\langle\cdot, \cdot\rangle$. 
\end{abstract}
	
\vspace{1cm}

{\bf Mathematics Subject Classification 2020:}  46C20,47B50, 47D03 

\vspace{1cm}

{\bf Keywords:}	Indefinite inner product space; positive vectors; unitary operators; semi-group of operators.

	\newpage
	\section{Introduction}
	\label{s1}
	Interest in linear spaces with indefinite inner product (indefinite inner product spaces) emerged in the early forties, drawing attention from various directions simultaneously, including both theoretical physicists and mathematicians. Spaces with indefinite inner product in quantum mechanics were first encountered by P. Dirac and W. Pauli \cite{Dir, Pauli}.
	Following these pioneering works, significant interest periodically arose in theoretical physics regarding indefinite inner product spaces   (indefinite metric spaces)  \cite{AM, Gup, Nagy, 123}.
	
	Recently, a renewed source of interest in this field emerged when
	it was proposed in \cite{R2} that the reality of the spectrum of the non-self-adjoint operator $H=-\frac{d^2}{dx^2} + x^2 + ix^3$
	is closely linked to its previously unnoticed
	$\mathcal{PT}$-symmetry. This symmetry allows $H$  to be considered a Hermitian operator with respect to the indefinite inner product ($\mathcal{PT}$ inner product)
	\begin{equation}\label{uman4}
		[f, g]=\int_{-\infty}^{\infty}[\mathcal{PT}f(x)]g(x)dx=\int_{-\infty}^{\infty}\overline{f(-x)}g(x)dx,
	\end{equation}
	where $(\mathcal{P}f)(x)=f(-x)$ is the space reflection (parity) operator and $(\mathcal{T}f)(x)=\overline{f(x)}$ is the complex conjugation operator.
	
	$\mathcal{PT}$-Symmetric approach has attracted continuous interest from physicists towards developing a variant of quantum mechanics based on Hamiltonians that are not Hermitian but satisfy certain less restrictive and more physically motivated symmetry conditions (for the $\mathcal{PT}$-symmetric approach in quantum mechanics, see  \cite{book} and references therein). In most cases, such kind of  non-self-adjoint Hamiltonians can be interpreted as Hermitian ones with respect to an appropriately chosen indefinite inner product \cite{AK_AK, Tanaka}.
	
	The common feature among the aforementioned activities is the utilization of a non-Hermitian Hamiltonian $H$ that is symmetric with respect to an indefinite inner product $[\cdot, \cdot]$ within a linear space $\mathcal{H}$, which is endowed with \emph{a given topology}, typically the Hilbert space topology. At times, establishing a specific topology at the outset can yield surprising outcomes \cite{Kuz, AK_Siegl}. This is because a newly constructed positive-definite inner product defined by $[\cdot, \cdot]$ may not align well with the original topology.

	In this paper, we aim to develop an alternative approach to overcome such kind of  difficulties.
	We start by considering a linear space $\mathcal{H}$
	equipped with an indefinite inner product $[\cdot,\cdot]$, \emph{without making any assumptions about its Hilbert space topology}.
	However, when attempting to develop a consistent quantum theory for a Hamiltonian
	$H$ that is Hermitian with respect to
	$[\cdot, \cdot]$, we encounter the fundamental difficulty of ensuring the unitarity of the dynamics generated by $H$. The problem  deals with the potential existence of eigenstates
	$f$  such that~
	$[f, f]<0$.
	Due to the probabilistic interpretation of the norm of states in standard quantum theory, the presence of eigenstates with negative indefinite inner product norm raises interpretational issues. A common approach in the literature (as reviewed in \cite{book}) to address the issue of interpretation involves constructing a special operator
	$\mathcal{C}$ such that the new $\mathcal{CPT}$ inner product
	\begin{equation}\label{dur}
		\langle f, g\rangle_{\mathcal{CPT}}=\int_{-\infty}^{\infty}[\mathcal{CPT}f(x)]g(x)dx=[\mathcal{C}f, g]
	\end{equation}
	turns out to be positive definite, and  $H$ becomes Hermitian in a Hilbert space defined by the standard (i.e., positive definite) inner product $\langle \cdot, \cdot\rangle_{\mathcal{CPT}}$.
	It should be noted that the construction of operator $\mathcal{C}$
	in an explicit form is quite difficult because
	$\mathcal{C}$
	depends on the specific Hamiltonian $H$
	being studied. It is not surprising that the majority of the available formulae are approximate, usually expressed as the leading terms of some perturbative series \cite{book}.
	
	In this paper, we present an alternative approach to establishing the unitarity of the dynamics generated by a time-independent non-self-adjoint operator
	$H$.   Our method involves constructing a suitable positive definite inner product, denoted by
	$\langle\cdot,\cdot\rangle$, from an initially indefinite inner product
	$[\cdot, \cdot]$. The goal is to ensure that
	$H$
	becomes Hermitian with respect to $\langle\cdot,\cdot\rangle$. In this case, the dynamics governed by the time-dependent Schrödinger equation can be characterized by a one-parameter group of  operators
	$\{ U(t)=e^{iHt} \ :
	\ t\in\mathbb{R}\}$
	that are unitary with respect to $\langle\cdot,\cdot\rangle$. According to Stone's theorem, the Hermiticity of
	$H$ and the unitarity of $\{U(t)\}$ are equivalent concepts.
	For this reason, due to certain technical aspects, we focus on analyzing a certain one-parameter semi-group
	$\{W(t) : t\geq{0}\}$. Our goal is to demonstrate that, under natural conditions, the semi-group
	$\{W(t)\}$
	can be extended into a one-parameter group of operators $\{U(t)\}$, and a suitable inner product
	$\langle\cdot,\cdot\rangle$  can be constructed, ensuring the unitarity of the operators $\{U(t)\}$.

	We start with the natural assumption that physical states
	$f$ must have a positive norm,
	$[f,f]>0$. Consequently, in the initial stage, we restrict our consideration to the action of linear operators on the \emph{nonlinear set} of positive  vectors\footnote{We adopt the conventional notation
		$\mathcal{F}_{++}$
		from the theory of indefinite inner product spaces \cite{AK_Azizov, AK_Bognar} to represent the set of positive vectors.}
	$\mathcal{F}_{++}=\{f\in\mathcal{H} \ : \ [f,f]>0\}.$
	Our work in this area was inspired by {a brief remark in \cite{AK_Bender5} during the study of manifestly non-self-adjoint $\mathcal{PT}$-symmetric Hamiltonians: `One can try to formulate a quantum theory associated with $\mathcal{PT}$-symmetric Hamiltonians by insisting that physical states must have positive norm. This leads to a quantum mechanics defined on a nonlinear state space. Such investigations are of interest, but the existence of negative-norm eigenstates still leaves open serious interpretational issues.'}
	
	The nonlinear set $\mathcal{F}_{++}$ defines the entire linear space $\mathcal{H}$
	in the sense that every
	$f\in\mathcal{H}$
	can be decomposed as
	$f=f_1+f_2$,
	where
	$f_i\in\mathcal{F}_{++}$. It should be noted that this decomposition can not be unique. However,
	a linear operator $W$ defined on $\mathcal{F}_{++}$ can be uniquely extended to a linear operator $W$ acting on  $\mathcal{H}$
	(Lemma \ref{uf5b}).
	
	In Section \ref{s2}, we consider a one-parameter semi-group of operators  $W_+=\{W(t) : t\geq{0}\}$ acting on a linear space $\mathcal{H}$ with indefinite inner product $[\cdot, \cdot]$. Our primary assumption regarding the behavior of $W(t)$
	is as follows: since physical states must have positive norms, we will assume that 
 \begin{center}
	\emph{the operators $W(t)$
	map the set of positive vectors $\mathcal{F}_{++}$
	onto itself in a one-to-one manner.}
	\end{center}
	This somehow natural assumption allows the semi-group of operators
		$W_+$
		to be transformed into a one-parameter group of operators $U=\{U(t) : t\in\mathbb{R}\}$ (see subsection \ref{2.3}).
	The operators $U(t)$ are unitary with respect to $[\cdot, \cdot]$, meaning they are bijective on $\mathcal{H}$ and preserve the indefinite inner product, i.e., $[U(t)f, U(t)f]=[f,f]$ for all $f\in\mathcal{H}$.
	However, such a group of unitary operators with respect to $[\cdot, \cdot]$
	cannot always be realized as  unitary operators {with respect to a positive definite inner product $\langle\cdot,\cdot\rangle$}.  We investigate this problem in Section \ref{3}.
	The first natural question is:
	\emph{how  can we determine a `relevant' positive definite inner product $\langle\cdot,\cdot\rangle$  on
		$\mathcal{H}$ using the indefinite inner product
		$[\cdot,\cdot]$?}
	This problem is addressed in subsection \ref{s3.1}. The solution depends on whether
	${\mathcal{H}}$ is decomposable, in the sense given above. In the decomposable case, a definite inner product can be defined in a natural way, see (\ref{uf2}) and (\ref{uf3}).
	
	In the non-decomposable case, the problem becomes significantly more complex due to the large number of neutral vectors. For the remainder of the paper, we will focus exclusively on decomposable spaces, except for few comments in subsection \ref{s3.1}
	
	Each decomposable space  $\mathcal{H}$ admits infinitely many fundamental decompositions (see (\ref{uf2}))
	$$
	\mathcal{H}=\mathcal{L}_+[\dot{+}]\mathcal{L}_-,
	$$
	into mutually orthogonal positive and negative linear subspaces $\mathcal{L}_\pm$.
	Every fundamental decomposition is  determined by an operator of fundamental symmetry  $J_{\mathcal{L}}$  that acts on elements
	$f\in\mathcal{H}$ as follows:
	$$
	J_{\mathcal{L}}f=J_{\mathcal{L}}(f_++f_-)=f_+-f_-, \qquad f_\pm\in\mathcal{L}_\pm.
	$$
	The concept of fundamental symmetry operators is closely related to the notion of operators $\mathcal{C}$  in the context of $\mathcal{PT}$-symmetric approach in quantum mechanics \cite{book}.
	
	The fundamental decomposition $\mathcal{H}=\mathcal{L}_+[\dot{+}]\mathcal{L}_-$,
	enables the definition of a \emph{definite} inner product $\langle{\cdot}, \cdot\rangle_{\mathcal{L}}=[J_{\mathcal{L}}\cdot, \cdot]$ on
	$\mathcal{H}$ that is a generalization of $\mathcal{CPT}$ inner product, (\ref{dur}). The completion of $\mathcal{H}$ with respect to the norm $\|\cdot\|_{\mathcal{L}}=\sqrt{\langle{\cdot}, \cdot\rangle_{\mathcal{L}}}$ results in both a Hilbert space $(\mathcal{H}_{ext},\langle{\cdot}, \cdot\rangle_{\mathcal{L}})$ and a Krein space $\mathcal{H}_{ext}$ with the indefinite inner product $[\cdot,\cdot]$. At first glance, the space $\mathcal{H}_{ext}$ seems like a suitable choice for our group of operators $U$, initially defined on the dense subset $\mathcal{H}$. However, we cannot be certain that these operators can be continuously extended to the entire space $\mathcal{H}_{ext}$. This problem is completely solved in Corollary \ref{uf64b} using operators of fundamental symmetry $J_t$, which correspond to the fundamental decompositions
	$$
	\mathcal{H}=\mathcal{L}_+^t[\dot{+}]\mathcal{L}_-^t, \qquad \mathcal{L}_+^t=U(t)\mathcal{L}_+, \quad  \mathcal{L}_-^t=U(t)\mathcal{L}_-.
	$$
	These decompositions show how the original decomposition (\ref{uf2}) changes under the action of the operators from the group $U$.
	
	If the conditions of Corollary \ref{uf64b} hold, meaning the operators $J_t$ are bounded for all
	$t>0$, then the group $U$ can be continuously extended to a group of unitary operators in the Krein space $\mathcal{H}_{ext}$, i.e., they  are unitary with respect to the indefinite inner product $[\cdot,\cdot]$. However, this result is not entirely satisfactory, as our goal is to obtain a group of operators that are unitary with respect to a definite inner product, ensuring unitarity in a Hilbert space. According to Theorem \ref{uf53}, the Hilbert spaces case is characterized by an additional condition of uniform boundedness imposed on the set of fundamental symmetries $\{J_t\}_{t>0}$. In this case, the set of equivalent definite inner products generated by various fundamental decompositions of the Krein space $\mathcal{H}_{ext}$ includes a `right' inner product $\langle \cdot, \cdot \rangle_{\mathcal{M}}$, making the group $U_{ext}$ a group of unitary operators in the Hilbert space $(\mathcal{H}_{ext}, \langle \cdot, \cdot \rangle_{\mathcal{M}})$.

	Throughout the paper, the symbol $\mathcal{H}$ denotes a linear space. A linear bijective  operator $H$ on $\mathcal{H}$ is referred as \emph{fully invertible}. A fully invertible operator $H$ acting on $\mathcal{H}$ is called \emph{unitary} with respect to an indefinite inner product $[\cdot, \cdot]$ if $[Hf,Hg]=[f,g]$ for all $f, g\in\mathcal{H}$.
	The symbol $\dot{+}$ denotes the direct sum of sets, whereas $[\dot{+}]$ is employed when the sets comprising the direct sum are mutually orthogonal under the indefinite inner product $[\cdot, \cdot]$.

	\section{Indefinite inner product spaces without topology}\label{s2}
	\subsection{Indefinite inner product space.}\label{s2.1}
	Let $\mathcal{H}$ be a complex linear space with a
	Hermitian sesquilinear form $[\cdot,\cdot]$  i.e., a mapping $[\cdot,\cdot]:{\mathcal H}\times{\mathcal H}\to\mathbb{C}$ such that  $[f,\alpha_1g_1+\alpha_2g_2]=\alpha_1[f,g_1]+\alpha_2[f,g_2]$ and $[f,g]=\overline{[g,f]}$ for all $f, g, g_1, g_2\in{\mathcal H}$, $\alpha_1, \alpha_2\in\mathbb{C}$.
	A Hermitian sesquilinear form $[\cdot,\cdot]$ is referred to as \emph{an indefinite inner product} if there exist vectors 
$f\in\mathcal{H}$ such that 
$[f,f]>0$, and simultaneously there exist vectors 
$g\in\mathcal{H}$ for which 
$[g, g]<0$.

	A  space ${\mathcal H}$  equipped with an indefinite inner product $[\cdot, \cdot]$ is called an \emph{indefinite inner product space}.
	An indefinite inner product space $\mathcal{H}$ is called
	\emph{non-degenerate} if the equality
	$[f, g]=0$ for all $g\in\mathcal{H}$ implies that $f=0$.
	
	Each degenerate indefinite inner product space $\mathcal{H}$
		contains a non-zero linear subspace of isotropic vectors
		$\mathcal{X}=\{f\in\mathcal{H} : [f, g]=0, \ g\in\mathcal{H}\}$. The space $\mathcal{H}$ can be transformed into a non-degenerate space by considering the quotient space
		$\mathcal{H}/\mathcal{X}$. Taking this into account, from now on \emph{we will only consider non-degenerate indefinite inner product spaces}. 
  
	A  vector $f\in\mathcal{H}$ is called {\it positive, negative}, or {\it neutral}
	if
	$$
	[f,f]>0,\qquad [f,f]<0,  \quad \mbox{or} \quad [f,f]=0.
	$$
	
	Denote by $\mathcal{F}_{++}$ and  $\mathcal{F}_{--}$ the sets of \emph{positive} and \emph{negative} vectors of $\mathcal{H}$, respectively.
	It is evident that if $f\in\mathcal{F}_{++}$ (or $\mathcal{F}_{--}$), then $c{f}\in\mathcal{F}_{++}$  (or $\mathcal{F}_{--}$) for any $c\in\mathbb{C}$.
	However, the sum of two positive (or negative) elements, $f+g$, does not necessarily belong to
	$\mathcal{F}_{++}$ (or $\mathcal{F}_{--}$).
	In this context, the sets $\mathcal{F}_{++}$ and $\mathcal{F}_{--}$
	are inherently \emph{nonlinear}, and therefore cannot form subspaces within an indefinite inner product space $\mathcal{H}$ \cite[Corollary 2.7]{AK_Bognar}.
	
	\begin{example} Consider a linear space $\mathcal{H}=\mathbb{R}^4$ with indefinite inner product (Minkowski inner product)
		$$
		[f_1, f_2]=t_1t_2-x_1x_2-y_1y_2-z_1z_2, \qquad f_j=(t_j, x_j,  y_j, z_j).
		$$
		The set of positive vectors $\mathcal{F}_{++}=\{f=(t, x, y, z) : x^2+y^2+z^2<t^2\}$ in $\mathcal{H}$  often appears when dealing with the Minkowski spacetime.
	\end{example}

	An operator
	$W$ defined on
	$\mathcal{F}_{++}$
	is said to be \emph{linear} if it satisfies the following conditions:
	$W(c{f})=c{Wf}$ for $f\in\mathcal{F}_{++}$ and $c\in\mathbb{C}$, and $W(f+g)=Wf+Wg$ for any $f,g\in\mathcal{F}_{++}$ such that $f+g\in\mathcal{F}_{++}$.
	
	\begin{lemma}\label{uf5b}
		A space $\mathcal{H}$ allows for the following decomposition:
		$
		\mathcal{H}=\mathcal{F}_{++}+\mathcal{F}_{++}$.  A linear operator $W$ defined on $\mathcal{F}_{++}$ can be uniquely extended to a linear operator acting on the entire space $\mathcal{H}$.
	\end{lemma}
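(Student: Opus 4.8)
The plan is to prove the two assertions in turn, the decomposition $\mathcal{H}=\mathcal{F}_{++}+\mathcal{F}_{++}$ first, and then use it to construct and verify the extension of $W$.

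For the decomposition, fix $f\in\mathcal{H}$ arbitrary. Since $[\cdot,\cdot]$ is indefinite there is a vector $e$ with $[e,e]>0$; rescaling by a complex constant keeps $e$ positive, so I may choose the scale freely. The idea is to write $f=(e+\lambda f)-(e-(1-\lambda)f)$ or, more cleanly, $f=g-h$ with $g,h$ both forced into $\mathcal{F}_{++}$ by adding a large multiple of $e$. Concretely, set $g=\tfrac12 f+t e$ and $h=-\tfrac12 f+t e$, so that $g+h=2te\in\mathcal{F}_{++}$ is automatic and $f=g-h$; then $[g,g]=\tfrac14[f,f]+t\,\mathrm{Re}[f,e]+t^2[e,e]$ and similarly $[h,h]=\tfrac14[f,f]-t\,\mathrm{Re}[f,e]+t^2[e,e]$, both of which are quadratics in $t$ with positive leading coefficient $[e,e]>0$, hence strictly positive for all sufficiently large real $t$. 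Fixing such a $t$ gives $f=g-h=g+(-h)$ with $g,-h\in\mathcal{F}_{++}$ (recall $\mathcal{F}_{++}$ is closed under scalar multiples, so $-h\in\mathcal{F}_{++}$). This proves $\mathcal{H}=\mathcal{F}_{++}+\mathcal{F}_{++}$.

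For the extension of $W$, given $f\in\mathcal{H}$ write $f=f_1+f_2$ with $f_i\in\mathcal{F}_{++}$ as above and define $\widetilde W f:=Wf_1+Wf_2$. The main point is well-definedness: if $f_1+f_2=f_1'+f_2'$ are two such decompositions, I must show $Wf_1+Wf_2=Wf_1'+Wf_2'$, i.e. $W(f_1-f_1')=W(f_2'-f_2)$ in a suitable sense. The subtlety is that $f_1-f_1'$ need not lie in $\mathcal{F}_{++}$, so $W(f_1-f_1')$ is not a priori defined; the trick is to introduce a common refinement. Pick any $h\in\mathcal{F}_{++}$ large enough (again by adding a multiple of the fixed positive vector $e$) that $f_1+h$, $f_1'+h$, and hence also $f_2-h=f-(f_1+h)$... — more carefully, choose $h\in\mathcal{F}_{++}$ with $f_1+h,\ f_1'+h\in\mathcal{F}_{++}$ and simultaneously (replacing $h$ by a still larger multiple of $e$) $f_2+h',\ f_2'+h'\in\mathcal{F}_{++}$; then using additivity of $W$ on pairs whose sum is positive one reduces both $Wf_1+Wf_2$ and $Wf_1'+Wf_2'$ to the same expression $W(f_1+h)+W(f_2-h)$ with all four arguments in $\mathcal{F}_{++}$, and these manipulations are legitimate because at each step the relevant partial sums are positive vectors. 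Linearity of $\widetilde W$ on $\mathcal{H}$ (additivity and homogeneity over $\mathbb{C}$) then follows routinely by choosing compatible decompositions of the vectors involved, and uniqueness of the extension is immediate since any linear extension must agree with $Wf_1+Wf_2$ on $f=f_1+f_2$.

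The step I expect to be the real obstacle is the well-definedness argument: one must organize the "common refinement" so that every intermediate sum that $W$ is applied to genuinely lies in $\mathcal{F}_{++}$, since $W$'s additivity is only assumed under that constraint. Managing this bookkeeping — essentially showing that the equivalence relation "$f_1+f_2=f_1'+f_2'$" is generated by the elementary moves permitted by linearity on $\mathcal{F}_{++}$ — is where the care is needed; everything else is formal.
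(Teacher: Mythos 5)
Your first half (the decomposition $\mathcal{H}=\mathcal{F}_{++}+\mathcal{F}_{++}$) is correct and is essentially the paper's argument: you force positivity by adding a large multiple of a fixed positive vector, exactly as the paper does with $f=g_+(c)+(-cf_+)$, $g_+(c)=f+cf_+$.

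The second half has a genuine gap, and it is precisely the step you yourself flag as ``the real obstacle.'' By defining $\widetilde Wf=Wf_1+Wf_2$ for an \emph{arbitrary} decomposition $f=f_1+f_2$ with $f_i\in\mathcal{F}_{++}$, you take on the task of proving consistency across all pairs of decompositions, and your sketch of that proof does not go through as written. The claim that both $Wf_1+Wf_2$ and $Wf_1'+Wf_2'$ reduce ``to the same expression $W(f_1+h)+W(f_2-h)$'' is unjustified: the natural reduction of the primed side is to $W(f_1'+h)+W(f_2'-h)$, which is a different expression, and no chain of elementary moves (each an application of additivity to a pair of positive vectors with positive sum) connecting the two is exhibited. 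There are also unresolved sign problems: $f_2-h$ need not lie in $\mathcal{F}_{++}$ even when $h$ is a large positive multiple of $e$ (indeed, making $h$ larger typically makes $f_2-h$ \emph{negative}), and the auxiliary $h'$ with $f_2+h'\in\mathcal{F}_{++}$ that you introduce is never actually used. So existence of the extension is not established.

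The paper avoids this bookkeeping by defining the extension only through decompositions of the special form $f=g_+(c)+(-cf_+)$, where the second summand is a scalar multiple of a reference positive vector. Well-definedness then splits into two tractable checks: independence of $c$ (where the difference $g_+(c)-g_+(c')=(c-c')f_+$ is automatically in $\mathcal{F}_{++}$, so additivity applies in one step) and independence of $f_+$ (where one uses that $C_{f_+}\cap C_{\tilde f_+}$ contains a neighborhood of infinity to choose $c,\tilde c$ with $\tilde c\tilde f_+-cf_+\in\mathcal{F}_{++}$, after which the two differences cancel). Your definition, once the extension is known to exist and be linear, does agree with the paper's on every decomposition --- but that is a consequence of the lemma, not an ingredient you may assume. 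To repair your argument you would either need to carry out the common-refinement reduction in full (exhibiting each intermediate sum as a positive vector), or restrict the defining decompositions to a one-parameter family as the paper does.
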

	\begin{proof}
		Let $f\in\mathcal{H}$ and $f_+\in\mathcal{F}_{++}$. Then
		\begin{equation}\label{uman1}
			[f+c{f_+}, f+c{f_+}]=[f, f]+2 Re[f,cf_+]+|c|^2[f_+, f_+]>0
		\end{equation}
		for sufficiently large $|c|$.
		{Let
			${C}_{f_+}$
			denote the set of all $c\in\mathbb{C}$
			for which inequality (\ref{uman1}) holds. By construction, ${C}_{f_+}$
			contains a neighborhood of infinity.}
		
		If $c\in{C}_{f_+}$, then the  vector $g_+(c)=f+c{f_+}$ belongs to $\mathcal{F}_{++}$ and
		\begin{equation}\label{uman45}
			f=g_+(c)+(-c{f_+}).
		\end{equation}
	This formula justifies the presentation of $\mathcal{H}$ as the algebraic sum $\mathcal{H}=\mathcal{F}_{++}+\mathcal{F}_{++}$.
		
		{Now, if $W$ is defined on $\mathcal{F}_{++}$, we can naturally put with the use of (\ref{uman45}) that}    \begin{equation}\label{uman2}
			Wf:=Wg_+(c)-cWf_+, \qquad c\in{C}_{f_+}
		\end{equation}
		
		{Formula (\ref{uman2}) provides a unique linear extension of $W$ onto $\mathcal{H}$ under the condition that
			(\ref{uman2}) is well-posed}, i.e., the vector $Wf$ does not depend on the choice of $c$ and $f_+$, which is what we will prove next.
		
		{Let
			$c'\in{C}_{f_+}$ and $c'\not=c$. Then, (\ref{uman2}) determines a vector $(Wf)':=Wg_+(c')-c'Wf_+$}, where $g_+(c')=f+c'{f_+}$.
		{Taking into account that $g_+(c)-g_+(c')=(c-c')f_+\in\mathcal{F}_{++}$ we obtain
			$$
			Wg_+(c)-Wg_+(c')=W(g_+(c)-g_+(c'))=(c-c')Wf_+.
			$$
			Hence,
			$Wf-(Wf)'=Wg_+(c)-Wg_+(c')+(c'-c)Wf_+=(c-c')Wf_+-(c-c')Wf_+=0$.
			This means that} the vector $Wf$ in (\ref{uman2}) \emph{does not depend} on the choice $c\in{C}_{f_+}$.
		
		Now assume that a different vector $\tilde{f}_+\in\mathcal{F}_{++}$ is used in (\ref{uman1}) - (\ref{uman2}) instead of $f_+$ and put $\widetilde{(Wf)}:=W\tilde{g}_+(\tilde{c})-\tilde{c}W\tilde{f}_+$, where $\tilde{g}_+(\tilde{c})=f+\tilde{c}{\tilde{f}_+}$ and $\tilde{c}\in{C}_{\tilde{f}_+}$. Then, for all $c,\tilde{c}\in{C}_{f_+}\cap{C}_{\tilde{f}_+}$,
		\begin{equation}\label{uman3}
			Wf-\widetilde{(Wf)}=Wg_+(c)-W\tilde{g}_+(\tilde{c})+W\tilde{c}\tilde{f}_+-Wcf_+.
		\end{equation}
		The set ${C}_{f_+}\cap{C}_{\tilde{f}_+}$ is non empty and it contains a neighborhood of infinity. This means we can select the parameters
			$c$ and $\tilde{c}$ in (\ref{uman3}) such that $
			\tilde{c}\tilde{f}_+-cf_+\in\mathcal{F}_{++}$. {Then $\tilde{g}_+(\tilde{c})-g_+(c)=\tilde{c}\tilde{f}_+-cf_+\in\mathcal{F}_{++}$.} In this case 
		$$
		W\tilde{c}\tilde{f}_+-Wcf_+=W(\tilde{c}\tilde{f}_+-cf_+) \quad \mbox{and} \quad Wg_+(c)-W\tilde{g}_+(\tilde{c})=W(g_+(c)-\tilde{g}_+(\tilde{c}))=-W(\tilde{c}\tilde{f}_+-cf_+).
	$$ 
 Substituting these relations into (\ref{uman3}) yields $Wf-\widetilde{(Wf)}=0$ that completes the proof.
	\end{proof}

	\subsection{Operators preserving the positivity of vectors.}\label{s2.2}

	By Lemma \ref{uf5b}, a linear operator originally defined on the nonlinear set of positive vectors $\mathcal{F}_{++}$ can be uniquely extended to a linear operator $W$ acting on the entire indefinite inner product space $\mathcal{H}$. This implies that the behavior of $W$ on $\mathcal{F}_{++}$ uniquely determines all of its properties on the linear space $\mathcal{H}$.
	
	By imposing additional conditions on the behavior of $W$ on $\mathcal{F}_{++}$, one can further specify the properties of its linear extension to the full space. In particular, the following statement illustrates that the bijectivity of $W$ on $\mathcal{F}_{++}$ is equivalent to $W$ being {\em close} to a unitary operator with respect to the indefinite inner product $[\cdot,\cdot]$.
	\begin{thm}\label{ups30}
		Let $W$ be a linear operator  defined on the set of positive vectors $\mathcal{F}_{++}$  and
		such that $W$ maps $\mathcal{F}_{++}$ onto itself in a one-to-one manner. Then its extension to the linear space $\mathcal{H}$ is fully invertible and
		\begin{equation}\label{e1a}
			[Wf, Wg]=\theta[f, g], \qquad f, g\in\mathcal{H},
		\end{equation}
		where
		$$	\theta=\inf_{f\in\mathcal{F}_{++}}\frac{[Wf,Wf]}{[f,f]}>0.
  $$
	\end{thm}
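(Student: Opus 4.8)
The plan is to proceed in three stages: promote the bijectivity of $W$ on $\mathcal{F}_{++}$ to full invertibility on all of $\mathcal{H}$; show that the extended $W$ preserves the trichotomy positive/neutral/negative for \emph{every} vector of $\mathcal{H}$; and then read off the constant $\theta$ from a one-real-parameter comparison of quadratic forms. For the first stage I would use the representation $f=g_+(c)-cf_+$ from the proof of Lemma~\ref{uf5b}, with $f_+\in\mathcal{F}_{++}$ and $c\in C_{f_+}$, $c\neq 0$: if $Wf=0$ then $Wg_+(c)=W(cf_+)$ with both arguments in $\mathcal{F}_{++}$, so injectivity of $W$ on $\mathcal{F}_{++}$ forces $g_+(c)=cf_+$, i.e.\ $f=0$, whence the extension is injective. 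For surjectivity, write a given $h\in\mathcal{H}$ as $h=p-dk_+$ with $p,\,dk_+\in\mathcal{F}_{++}$, choose $a,b\in\mathcal{F}_{++}$ with $Wa=p$ and $Wb=dk_+$ (possible since $W$ is onto $\mathcal{F}_{++}$), and note $W(a-b)=h$. Thus $W$ is fully invertible on $\mathcal{H}$, and $W^{-1}$ is again linear and maps $\mathcal{F}_{++}$ onto itself one-to-one.

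The second stage is where the real work lies, and I expect it to be the main obstacle. Since $W$ is a bijection of $\mathcal{H}$ carrying $\mathcal{F}_{++}$ onto itself, it carries the complement $\{f:[f,f]\le 0\}$ onto itself as well. Next I would show $W$ sends neutral vectors to neutral vectors: given $g\neq 0$ with $[g,g]=0$, pick $f_+\in\mathcal{F}_{++}$ with $\mathrm{Re}[g,f_+]\neq 0$ — possible because $\mathcal{F}_{++}$ spans $\mathcal{H}$ and $[\cdot,\cdot]$ is non-degenerate, after adjusting $f_+$ by a unimodular factor — so that the scalar polynomial $\epsilon\mapsto [g+\epsilon f_+,\,g+\epsilon f_+]=2\epsilon\,\mathrm{Re}[g,f_+]+\epsilon^2[f_+,f_+]$ is strictly positive for small $\epsilon$ of one sign and strictly negative for small $\epsilon$ of the other. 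Applying $W$ and using the previous sentence, the real polynomial $\epsilon\mapsto [W(g+\epsilon f_+),\,W(g+\epsilon f_+)]$ is then positive on one side of $0$ and $\le 0$ on the other, so by ordinary continuity of a real polynomial at $\epsilon=0$ its value there, $[Wg,Wg]$, must be $0$. The same argument applies to $W^{-1}$, hence $W$ maps neutral vectors \emph{onto} neutral vectors and consequently $\mathcal{F}_{--}$ onto $\mathcal{F}_{--}$; in summary $\mathrm{sgn}[Wf,Wf]=\mathrm{sgn}[f,f]$ for all $f\in\mathcal{H}$. The point of difficulty is exactly that $\mathcal{H}$ carries no topology: one cannot take limits of vectors, so this transfer of the whole trichotomy must be forced through continuity of \emph{scalar} functions of a real parameter, and it genuinely uses bijectivity of \emph{both} $W$ and $W^{-1}$ on $\mathcal{F}_{++}$.

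For the last stage, fix $f_0\in\mathcal{F}_{++}$, set $\theta:=[Wf_0,Wf_0]/[f_0,f_0]>0$, and write $B(f,g):=[Wf,Wg]$, a Hermitian form on $\mathcal{H}$. For $u$ with $[u,u]<0$ I would compare the two real quadratics $q(t):=[f_0+tu,\,f_0+tu]$ and $\tilde q(t):=B(f_0+tu,\,f_0+tu)$: by the second stage they have the same sign for every $t\in\mathbb{R}$; since $q$ has leading coefficient $[u,u]<0$ and $q(0)>0$, it has two distinct real zeros $t_1<0<t_2$, $\tilde q$ must vanish exactly there, so $\tilde q$ is genuinely quadratic, $\tilde q(t)=B(u,u)(t-t_1)(t-t_2)$ and $q(t)=[u,u](t-t_1)(t-t_2)$, and evaluating at $t=0$ gives $B(u,u)/[u,u]=B(f_0,f_0)/[f_0,f_0]=\theta$. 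Hence $B(u,u)=\theta[u,u]$ whenever $[u,u]\le 0$ (the neutral case being trivial). For $[v,v]>0$, fix any $u_0$ with $[u_0,u_0]<0$; then $[v+su_0,\,v+su_0]<0$ for all large real $s$, so $B(v+su_0,\,v+su_0)=\theta[v+su_0,\,v+su_0]$ for all such $s$, and since two polynomials in $s$ agreeing at infinitely many points coincide, comparison of constant terms yields $B(v,v)=\theta[v,v]$. Thus $B(f,f)=\theta[f,f]$ for all $f\in\mathcal{H}$, and polarization of the Hermitian form upgrades this to $[Wf,Wg]=\theta[f,g]$. Finally, since $[Wf,Wf]/[f,f]$ is the constant $\theta$ on $\mathcal{F}_{++}$, the infimum in the statement equals $\theta>0$. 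The guiding principle behind this stage is simply that a Hermitian form is determined, up to a positive multiple, by its cone of positive vectors, and the two-sided bijectivity from the first two stages is precisely what lets us transfer that cone between $[\cdot,\cdot]$ and $B$.
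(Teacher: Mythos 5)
Your proof is correct, but it takes a genuinely different route from the paper. The paper's own argument is two lines long: it invokes Lemma \ref{uf5b} to obtain the unique linear extension of $W$ to $\mathcal{H}$, observes that this extension still maps $\mathcal{F}_{++}$ bijectively onto itself, and then cites the Krein--Shmulian theory of plus-operators (\cite[Theorem II.8.9]{AK_Bognar}, originally \cite[Theorem 1.5]{Krein}) to obtain full invertibility and the identity $[Wf,Wg]=\theta[f,g]$. You instead reprove that classical result from scratch, and your three stages hold up under scrutiny: the injectivity/surjectivity argument via the decomposition $f=g_+(c)-cf_+$ is sound; the transfer of the sign trichotomy through one-real-parameter perturbations $g+\epsilon f_+$ correctly exploits non-degeneracy (to find $f_+\in\mathcal{F}_{++}$ with $\mathrm{Re}[g,f_+]\neq 0$) and the fact that a bijection preserving $\mathcal{F}_{++}$ also preserves its complement; and the quadratic-comparison argument, together with the polynomial-identity trick for positive vectors and polarization, correctly pins down the single constant $\theta$. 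What the paper's approach buys is brevity and a direct link to the established literature on plus-operators in indefinite inner product spaces; what yours buys is a self-contained, elementary proof that makes explicit exactly which structural facts (two-sided bijectivity on $\mathcal{F}_{++}$, non-degeneracy, the purely algebraic decomposition $\mathcal{H}=\mathcal{F}_{++}+\mathcal{F}_{++}$) are actually needed, and that visibly requires no topology on $\mathcal{H}$ beyond continuity of scalar polynomials in a real parameter.
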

	\begin{proof}
		By virtue of Lemma \ref{uf5b}, there exists a unique linear extension of
		$W$ to the entire linear space $\mathcal{H}$. This extension preserves the property that $W$ maps
		$\mathcal{F}_{++}$ bijectively into itself. Applying \cite[Theorem II.8.9]{AK_Bognar} (or, the original version \cite[Theorem 1.5]{Krein}), we complete the proof.
	\end{proof}
	
	{\bf Remark:--}
		Verifying the  condition, that
		$W$ maps $\mathcal{F}_{++}\to\mathcal{F}_{++}$ bijectively, can be reduced to an equivalent condition: there exists the inverse operator $W^{-1}$ and the operators
		$W$ and $W^{-1}$ both map $\mathcal{F}_{++}$
		into $\mathcal{F}_{++}$.

	\begin{example}\label{ex4} 
		Let $\mathcal{H}=\mathcal{S}(\mathbb{R})$ be the set of smooth functions with rapid decay
		equipped with $\mathcal{PT}$-inner product (\ref{uman4})
		The operator $W$ defined by the formula
		$$
		(Wf)(x)=f(rx), \qquad r\neq0, \quad f\in\mathcal{S}(\mathbb{R})
		$$
		maps  $\mathcal{F}_{++}$   into $\mathcal{F}_{++}$ since
		$$
		[Wf, Wf]=\int_{-\infty}^{\infty}\overline{f(-rx)}\,f(rx)dx=\frac{1}{|r|}[f,f], \qquad f\in\mathcal{S}(\mathbb{R}).
		$$
		The similar property holds for its inverse $(W^{-1}f)(x)=f(r^{-1}x)$, i.e., $W^{-1} : \mathcal{F}_{++} \to \mathcal{F}_{++}$. This means that the mapping $W$ determines bijection on $\mathcal{F}_{++}$ and Theorem \ref{ups30} can be applied with $\theta=\frac{1}{|r|}$.
	\end{example}

	The condition that $W$
	acts as a bijection on
	$\mathcal{F}_{++}$
	is quite strong, and this is also reflected in the spectral properties of
	$W$. To illustrate this phenomenon we consider a finite linear span  $\mathcal{H}={\sf \ span}\{f_n\}_{n=1}^\infty$, where $\{f_n\}_{n=1}^\infty$ is a sequence of vectors orthogonal with respect to $[\cdot, \cdot]$, i.e., $[f_n, f_m]=0$, $n\not={m}$. The assumption that $\mathcal{H}$ is a non-degenerate space leads to the requirement that $[f_n, f_n]\not=0$.
	Without loss of generality we can assume that $|[f_n, f_n]|=1$ for all $n\in\mathbb{N}$.
	Separating the sequence  $\{f_n\}$ by the signs of $[f_n,f_n]$:
	$$
	f_{n}=\left\{\begin{array}{l}
		f_{n}^+ \quad \mbox{if} \quad [f_{n},f_{n}]=1, \\
		f_{n}^- \quad \mbox{if} \quad [f_{n},f_{n}]=-1,
	\end{array}\right.
	$$
	we obtain two sequences of positive $\{f_n^+\}$ and negative $\{f_m^-\}$  elements, which also satisfy $[f_{n}^+,f_{m}^-]=0$.

	\begin{prop}\label{uf46}
		Let $W$ be a linear operator acting in an indefinite inner product space $\mathcal{H}={\sf \ span}\{f_n\}_{n=1}^\infty$ and defined on vectors $f_n$ as follows:
		$$
		Wf_n=\mu_nf_n, \qquad \mu_n\in\mathbb{C}, \quad \mu_n\neq0.
		$$
		The following are equivalent:
		\begin{itemize}
			\item[(i)] the operator $W$ maps $\mathcal{F}_{++}$ onto itself in a one-to-one manner;
			\item[(ii)] there exists $c>0$ such that $
			c=|\mu_1|=|\mu_2|=\ldots = |\mu_n|=\ldots
			$ for all  $\mu_n$.
		\end{itemize}
	\end{prop}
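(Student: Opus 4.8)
\section*{Proof proposal}

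The plan is to prove the two implications separately; the substantive direction $(i)\Rightarrow(ii)$ will follow almost immediately from Theorem \ref{ups30}, while $(ii)\Rightarrow(i)$ is a direct computation exploiting the diagonal form of $W$.

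\textbf{Proof of $(ii)\Rightarrow(i)$.} First I would record the elementary identity obtained from the mutual $[\cdot,\cdot]$-orthogonality of the $f_n$: for every $f=\sum_n a_nf_n\in\mathcal{H}$ (a finite sum),
\[
[Wf,Wf]=\sum_n|a_n|^2|\mu_n|^2\,[f_n,f_n].
\]
Assuming $(ii)$, i.e. $|\mu_n|=c$ for all $n$, the right-hand side equals $c^2[f,f]$, so $[Wf,Wf]=c^2[f,f]$ and $W$ sends $\mathcal{F}_{++}$ into $\mathcal{F}_{++}$. Since every $\mu_n\neq0$, the operator $W^{-1}$ is well defined on $\mathcal{H}$ by $W^{-1}f_n=\mu_n^{-1}f_n$, and the same identity applied to $W^{-1}$ gives $[W^{-1}g,W^{-1}g]=c^{-2}[g,g]$, so $W^{-1}$ also maps $\mathcal{F}_{++}$ into $\mathcal{F}_{++}$. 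By the Remark following Theorem \ref{ups30} this is exactly the statement that $W$ maps $\mathcal{F}_{++}$ onto itself in a one-to-one manner. (If one prefers to avoid that Remark, surjectivity can be checked by hand: for $g=\sum_nb_nf_n\in\mathcal{F}_{++}$ the vector $f=\sum_n\mu_n^{-1}b_nf_n$ satisfies $Wf=g$ and $[f,f]=c^{-2}[g,g]>0$, while injectivity of $W$ is clear since the $f_n$ are linearly independent and $\mu_n\neq0$.)

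\textbf{Proof of $(i)\Rightarrow(ii)$.} Suppose $W$ is a bijection of $\mathcal{F}_{++}$ onto itself. Its restriction to $\mathcal{F}_{++}$ is a linear operator on $\mathcal{F}_{++}$, and by the uniqueness part of Lemma \ref{uf5b} the linear extension of this restriction to $\mathcal{H}$ is again $W$; hence Theorem \ref{ups30} applies and provides a constant $\theta>0$ with $[Wf,Wg]=\theta[f,g]$ for all $f,g\in\mathcal{H}$. Taking $f=g=f_n$ gives $|\mu_n|^2[f_n,f_n]=\theta[f_n,f_n]$, and since $[f_n,f_n]\neq0$ (indeed $|[f_n,f_n]|=1$ by the normalization adopted), this forces $|\mu_n|^2=\theta$ for every $n$. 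Setting $c:=\sqrt{\theta}$ yields $(ii)$.

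\textbf{On the main difficulty.} There is essentially no hard step: the entire weight of $(i)\Rightarrow(ii)$ is carried by Theorem \ref{ups30}, and the diagonal structure of $W$ collapses the conclusion to a one-line evaluation on a single eigenvector, while $(ii)\Rightarrow(i)$ is a short computation. The only point deserving a word of care is the observation that the operator $W$ of the statement coincides with the canonical extension of its own restriction to $\mathcal{F}_{++}$, which is what makes Theorem \ref{ups30} literally applicable.
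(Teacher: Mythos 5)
Your proof is correct and follows essentially the same route as the paper: $(i)\Rightarrow(ii)$ by applying Theorem \ref{ups30} and evaluating the identity $[Wf,Wg]=\theta[f,g]$ at $f=g=f_n$, and $(ii)\Rightarrow(i)$ by the direct orthogonality computation $[Wf,Wf]=c^2[f,f]$ together with the same argument for $W^{-1}$. Your added remarks (that $W$ coincides with the canonical extension of its restriction to $\mathcal{F}_{++}$, and the by-hand surjectivity check) are harmless extra care, not a different method.
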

	\begin{proof} $(i)\to(ii)$. By virtue of  Theorem \ref{ups30}, the  relation (\ref{e1a})
		holds. Setting here $f_n=f=g$  we obtain $(ii)$ with $c=\sqrt{\theta}>0$.
		
		$(ii)\to(i)$. An arbitrary $f\in\mathcal{H}\cap\mathcal{F}_{++}$
		has the form $f=\sum_{i=1}^pc_{n_i}f_{n_i}^++\sum_{i=1}^kc_{m_i}f_{m_i}^-$, where $\{f_{n_i}^+\}_{i=1}^p$ and $\{f_{m_i}^-\}_{i=1}^k$ are arbitrary chosen vectors from the sets $\{f_n^+\}$ and  $\{f_m^-\}$, respectively and the coefficients $\{c_{n_i}, c_{m_i}\}$
		satisfy the relation
		$$
		[f, f]=\sum_{i=1}^p|c_{n_i}|^2-\sum_{i=1}^k|c_{m_i}|^2>0.
		$$
		If condition $(ii)$ holds, then
		$$
		[Wf, Wf]=\left[\sum_{i=1}^p\mu_{n_i}c_{n_i}f_{n_i}^++\sum_{i=1}^k\mu_{m_i}c_{m_i}f_{m_i}^-, \ \sum_{i=1}^p\mu_{n_i}c_{n_i}f_{n_i}^++\sum_{i=1}^k\mu_{m_i}c_{m_i}f_{m_i}^-\right]=
		$$
		$$
		=|c|^2\left(\sum_{i=1}^p|c_{n_i}|^2-\sum_{i=1}^k|c_{m_i}|^2\right)=|c|^2[f,f].
		$$
		This means that
		$W$ maps $\mathcal{F}_{++}$ into itself.
		The similar result holds true for the inverse operator $W^{-1}$ defined by the formula  $W^{-1}f_n=\frac{1}{\mu_n}f_n$.
		Therefore, $W$ maps $\mathcal{F}_{++}$ onto itself in a one-to-one manner.
\end{proof}
{\bf Remark:--} The indefiniteness of the inner product $[\cdot, \cdot]$ plays a crucial role in our analysis. If $[\cdot, \cdot]$ were a definite inner product, then $\mathcal{F}_{++}=\mathcal{H}\setminus\{0\}$, and, obviously, Proposition \ref{uf46} would not hold. For instance, let us consider a strictly increasing, positive and bounded sequence $\{\mu_n\}_{n=1}^\infty$.  Let us further consider a vector space 
	$\mathcal{H}={\sf span}\{g_n\}_{n=1}^\infty$, in which $\langle g_n,g_m\rangle=\delta_{nm}$, $\langle\cdot,\cdot\rangle$ being a definite scalar produce, and a linear operator $V$ defined on $\mathcal{H}$ as $Vg_n=\gamma_ng_n$. It is clear that $V$ is invertible, but the equality in $(ii)$ of Proposition \ref{uf46} does not hold. Of course, for the pre-Hilbert space $\mathcal{H}={\sf span}\{g_n\}_{n=1}^\infty$ the notion of $\mathcal{F}_{++}$ is not so relevant.

 \subsection{Construction of the group of operators.}\label{2.3}
	We begin with a one-parameter semi-group of linear operators $W_+=\{W(t) : t\geq{0}\}$, i.e.,
	\begin{equation}\label{add1}
	W(0)=I, \quad W(t_1+t_2)=W(t_1)W(t_2)=W(t_2)W(t_1), \quad t_1, t_2\in[0, \infty)
	\end{equation}
	acting in an indefinite inner product space $\mathcal{H}$
	and such that
	{\sf all operators of $W_+$  map the set of positive vectors $\mathcal{F}_{++}$ onto itself in a one-to-one manner}. Then, as follows from Theorem \ref{ups30}, the operators $W(t)$ are fully invertible in $\mathcal{H}$ and the relation
	\begin{equation}\label{e1}
		[W(t)f, W(t)g]=\theta(t)[f, g], \qquad f, g\in\mathcal{H}, \quad t\geq{0}
	\end{equation}
	holds, where
	\begin{equation}\label{e2}
		\theta(t)=\inf_{f\in\mathcal{F}_{++}}\frac{[W(t)f,W(t)f]}{[f,f]}>0.
	\end{equation}
	\begin{thm}\label{thm1}
		If the function $r(t)=[W(t)f, W(t)f]$ is continuous on $[0, \infty)$ for some  $ f\in\mathcal{F}_{++}$,
		then the function $\theta(\cdot)$ in (\ref{e2}) has the form $\theta(t)=e^{\alpha{t}}$, where $\alpha\in{\mathbb{R}}$.
	\end{thm}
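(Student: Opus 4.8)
The plan is to show that $\theta(\cdot)$ is a measurable (indeed continuous) solution of the Cauchy multiplicative functional equation $\theta(t_1+t_2)=\theta(t_1)\theta(t_2)$ on $[0,\infty)$ with $\theta(t)>0$, from which the form $\theta(t)=e^{\alpha t}$ follows by the classical theory of the exponential Cauchy equation. First I would establish the multiplicativity: fix $f\in\mathcal{F}_{++}$ and $t_1,t_2\geq 0$. Applying \eqref{e1} twice and using the semigroup property \eqref{add1}, $W(t_1+t_2)f=W(t_1)W(t_2)f$, we get
\begin{equation}
[W(t_1+t_2)f,W(t_1+t_2)f]=\theta(t_1)[W(t_2)f,W(t_2)f]=\theta(t_1)\theta(t_2)[f,f].
\end{equation}
On the other hand this equals $\theta(t_1+t_2)[f,f]$ directly from \eqref{e1}. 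Since $[f,f]>0$ we may divide, obtaining $\theta(t_1+t_2)=\theta(t_1)\theta(t_2)$ for all $t_1,t_2\geq 0$. Note $\theta(0)=1$ from $W(0)=I$, consistently.

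Next I would derive continuity of $\theta$ from the hypothesis. The assumption is that $r(t)=[W(t)f,W(t)f]$ is continuous for some particular $f\in\mathcal{F}_{++}$; but by \eqref{e1}, for that same $f$ we have $r(t)=\theta(t)[f,f]$, so $\theta(t)=r(t)/[f,f]$ is continuous on $[0,\infty)$ as a scalar multiple of a continuous function. Combined with positivity $\theta(t)>0$, we may take logarithms: $\psi(t):=\ln\theta(t)$ is a continuous function on $[0,\infty)$ satisfying the additive Cauchy equation $\psi(t_1+t_2)=\psi(t_1)+\psi(t_2)$. A continuous additive function on $[0,\infty)$ is linear, $\psi(t)=\alpha t$ with $\alpha=\psi(1)\in\mathbb{R}$ (one proves $\psi(qt)=q\psi(t)$ for rational $q\geq 0$ first, then extends by density and continuity). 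Therefore $\theta(t)=e^{\alpha t}$ with $\alpha\in\mathbb{R}$, as claimed.

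I do not expect a serious obstacle here: the only subtle point is that the continuity hypothesis is stated for a single vector $f\in\mathcal{F}_{++}$, but the proportionality relation \eqref{e1} immediately promotes this to continuity of the scalar $\theta(t)$ itself (which is what one needs), and indeed shows that $r(t)$ is automatically continuous for every $f\in\mathcal{F}_{++}$ once it is for one. After that the argument is the textbook solution of the exponential Cauchy functional equation under a regularity hypothesis, so no further difficulty arises. One might remark that mere measurability of $r(\cdot)$ (rather than continuity) would already suffice, by the Banach–Sierpiński theorem on measurable solutions of Cauchy's equation, but continuity gives the cleanest self-contained proof.
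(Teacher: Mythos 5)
Your proposal is correct and follows essentially the same route as the paper: multiplicativity of $\theta$ from the semigroup law and (\ref{e1}), continuity of $\theta$ read off from $r(t)=\theta(t)[f,f]$ with $[f,f]>0$, and then the exponential Cauchy equation. The only difference is cosmetic: the paper cites \cite[Theorem 4.17.2]{Hille} for the last step, whereas you solve the continuous Cauchy equation by hand via the logarithm.
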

	\begin{proof} For each $t_1, t_2>0$, equations  (\ref{add1}) and (\ref{e1}) imply that, $\forall f\in\mathcal{H}$,
		$$
		\theta(t_1+t_2)[f,f]=[W(t_1+t_2)f,W(t_1+t_2)f]=[W(t_2)W(t_1)f,W(t_2)W(t_1)f]=$$
		$$=\theta(t_2)[W(t_1)f,W(t_1)f]=\theta(t_2)\theta(t_1)[f,f].
		$$
		In particular, if we take $f\in\mathcal{F}_{++}$, $[f,f]>0$, so that  $\theta(t_1+t_2)=\theta(t_1)\theta(t_2)$. Moreover, it follows from (\ref{add1}) - (\ref{e2}) that $\theta(0)=1$, is continuous on $[0, \infty)$, and real. Hence, by virtue of \cite[Theorem 4.17.2]{Hille}, there  exists $\alpha\in\mathbb{R}$ such that  $\theta(t)=e^{\alpha{t}}$.

	\end{proof}
	
	Denote
	\begin{equation}\label{e4}
		U(t)=e^{-\frac{\alpha}{2}t}W(t), \qquad t\geq{0}.
	\end{equation}
	Operators $U(t)$ are fully invertible on $\mathcal{H}$ and  $U_+=\{U(t) : t\geq{0}\}$   is a one-parameter semi-group of linear operators acting in $\mathcal{H}$. Moreover, in view of (\ref{e1}) 
\begin{equation}\label{uf12}
		[U(t)f, U(t)g]=[f, g], \qquad f, g\in\mathcal{H}, \quad t\geq{0}.
	\end{equation}
	{This means that operators $U(t)$ are unitary with respect to $[\cdot, \cdot]$.}
	
	By defining $U(-t)$ as the inverse of $U(t)$ for $t>0$, we can extend the semi-group $U_+$ to  a one-parameter group of  fully invertible linear operators $U=\{U(t) : t\in\mathbb{R}\}$ {that are unitary with respect to $[\cdot,\cdot]$.}
	\begin{example}\label{uman10}
		Consider the semi-group of operators
		$$
		(W(t)f)(x)=f(e^tx), \qquad f\in\mathcal{S}(\mathbb{R}), \quad t\geq0
		$$
		acting in the space $\mathcal{H}=\mathcal{S}(\mathbb{R})$ with
		$\mathcal{PT}$ inner product
		(\ref{uman4}).
		By virtue of Example
		\ref{ex4}, the operators
		$W(t)$ are bijections on
		$\mathcal{F}_{++}$ and the function $\theta(t)$ in (\ref{e2}) coincides with
		$e^{-t}$. According to (\ref{e4}) and (\ref{uf12}), the operators
		$U=\{U(t) : t\in\mathbb{R}\}$ defined as
		$$
		(U(t)f)(x)=e^{t/2}W(t)f=e^{t/2}f(e^tx), \quad f\in\mathcal{H}=\mathcal{S}(\mathbb{R}), \quad t\in\mathbb{R}
		$$
		constitute a one-parameter  dilation group on $\mathcal{H}$ that preserves the indefinite inner product $[\cdot,\cdot]$.    \end{example}

	\begin{example}\label{uman63}
		Consider an indefinite inner product space $\mathcal{H}={\sf \ span}\{f_n  :  n\in\mathbb{N}\}$, where $\{f_n\}$ is a sequence of vectors orthonormal with respect to $[\cdot, \cdot]$, i.e., $|[f_n, f_m]|=\delta_{nm}$.
		The formula
		$$
		W(t)f_n=e^{\lambda_nt}f_n, \qquad \lambda_n\in\mathbb{C}, \quad t\geq{0},
		\quad n\in\mathbb{N}.
		$$
		determines a one-parameter semi-group of linear operators on $\mathcal{H}$.
		
		In view of Proposition \ref{uf46} {with $\mu_n=e^{\lambda_nt}$}, the operator $W(t)$ maps $\mathcal{F}_{++}$ onto itself in a one-to-one manner if and only if
		\begin{equation}\label{uman1963}
			0<c(t)=e^{{\sf Re}\lambda_1t}=e^{{\sf Re}\lambda_2t}=\ldots=e^{{\sf Re}\lambda_nt}=\ldots
		\end{equation}
		In this case, the function $\theta(\cdot)$ in (\ref{e2}) has the form $\theta(t)=c^2(t)=e^{\alpha{t}}$, where
		$$
		\alpha={2{\sf Re}\lambda_1}={2{\sf Re}\lambda_2}=\ldots={2{\sf Re}\lambda_n}=\ldots
		$$
		By virtue of  Theorem \ref{thm1}  and the relation (\ref{e4}),
		the semi-group $U_+$ of operators $U(t)$ on $\mathcal{H}$
		is defined by the formula
		\begin{equation}\label{ff1}
			U(t)f_n=e^{-\frac{\alpha}{2}}W(t)f_n=e^{-{\sf Re} \lambda_n}W(t)f_n=e^{i{\sf Im} \lambda_nt}f_n, \quad t\geq{0}.
		\end{equation}
		The semi-group $U_+$ can be extended to a group {$U$ of unitary operators with respect to $[\cdot,\cdot]$} by considering (\ref{ff1}) for all $t\in\mathbb{R}$.
	\end{example}
{\bf Remark:--} {All the results in this section can be reformulated for the case of negative vectors, $\mathcal{F}_{--}$.}

	\section{One-parameter group of unitary operators in a Hilbert space} \label{3}
	The group
	$U$ of unitary operators with respect to $[\cdot, \cdot]$ considered in subsection \ref{2.3}
	cannot always be guaranteed to be realized as unitary operators in a Hilbert space. Below, we investigate this problem in detail.
	To begin, we consider in subsection \ref{s3.1} a more general problem: \emph{how can a definite inner product $\langle\cdot,\cdot\rangle$  be defined on an indefinite inner product space
		$\mathcal{H}$ using the indefinite inner product
		$[\cdot, \cdot]$?}
	
	\subsection{Decomposable and non-decomposable spaces.}\label{s3.1}
	A linear subspace $\mathcal{L}$ of an indefinite inner product space $\mathcal{H}$ is called \emph{positive} or \emph{negative} if $\mathcal{L}\subset\mathcal{F}_{++}\cup\{0\}$ or $\mathcal{L}\subset\mathcal{F}_{--}\cup\{0\}$, respectively. A linear subspace $\mathcal{L}$ is called  \emph{neutral} if all vectors of $\mathcal{L}$ are neutral.
	
	A space $\mathcal{H}$  is said to be \emph{decomposable}, if it can be represented as the orthogonal (with respect to $[\cdot,\cdot]$)
	direct sum
	\begin{equation}\label{uf2}
		\mathcal{H}=\mathcal{L}_+[\dot{+}]\mathcal{L}_-,
	\end{equation}
	where $\mathcal{L}_+$ is a positive subspace of $\mathcal{H}$ and $\mathcal{L}_-$ is a negative subspace of $\mathcal{H}$.
	
	The decomposition (\ref{uf2}) of
	$\mathcal{H}$ into mutually orthogonal positive and negative linear subspaces is not unique, and each such decomposition is referred to as \emph{fundamental}.
	
	A decomposable space $\mathcal{H}$ is called a \emph{Krein space} if
	the positive subspace $\mathcal{L}_+$ is complete with respect to the inner product $[\cdot, \cdot]$, and likewise, the negative subspace $\mathcal{L}_-$ possesses the same property with respect to the inner product $-[\cdot, \cdot]$. In other words, both $\mathcal{L}_+$ and $\mathcal{L}_-$, endowed with the inner products $[\cdot, \cdot]$ and $-[\cdot, \cdot]$, respectively, must be Hilbert spaces. In general, \emph{we do not assume that the original space $\mathcal{H}$ is a Krein space}.
	
	Decomposable spaces are frequently employed in mathematical physics. Their primary advantage lies in the fact that the decomposition (\ref{uf2}) allows us to determine a natural, \emph{definite} inner product\footnote{The subscript $\mathcal{L}$ is added in the notation of the inner product to emphasize its connection with the decomposition (\ref{uf2}).} on $\mathcal{H}$:
	\begin{equation}\label{uf3}
		\langle f, g\rangle_{\mathcal{L}}=[f_+, g_+]-[f_-, g_-], \quad f=f_++f_-, \quad g=g_++g_-, \quad f_\pm, g_\pm\in\mathcal{L}_\pm
	\end{equation}
	that is closely related to $[\cdot, \cdot]$.
	The space
	$\mathcal{H}$ endowed with the definite inner product (\ref{uf3}) is a pre-Hilbert space.
	
	The decomposition (\ref{uf2}) is uniquely determined by an operator $J_{\mathcal{L}}$ in $\mathcal{H}$ that acts on elements
	$f\in\mathcal{H}$ as follows:
	$$
	J_{\mathcal{L}}f=J_{\mathcal{L}}(f_++f_-)=f_+-f_-, \qquad f_\pm\in\mathcal{L}_\pm.
	$$
	The operator $J_{\mathcal{L}}$ is called the \emph{operator of fundamental symmetry} associated with the fundamental decomposition (\ref{uf2}).
	This operator is fully invertible in $\mathcal{H}$, its spectrum  consists of two eigenvalues $-1$ and $1$, and the corresponding kernel subspaces coincide with $\mathcal{L_\pm}$, namely
	$$
	\mathcal{L}_+=\ker(I-J_{\mathcal{L}})=(I+J_{\mathcal{L}})\mathcal{H} \quad \mbox{and} \quad \mathcal{L}_-=\ker(I+J_{\mathcal{L}})=(I-J_{\mathcal{L}})\mathcal{H}.
	$$
	
	It is important that, the definite inner product $\langle\cdot, \cdot\rangle_{\mathcal{L}}$ on $\mathcal{H}$ is  defined by $J_{\mathcal{L}}$ and the indefinite inner product $[\cdot,\cdot]$, specifically
	\begin{equation}\label{bu1}
		\langle{f}, g\rangle_{\mathcal{L}}=[J_{\mathcal{L}}f, g] \quad \mbox{and} \quad
		[f, g]=\langle{J}_{\mathcal{L}}{f}, g\rangle_{\mathcal{L}}, \qquad f, g\in\mathcal{H}.
	\end{equation}
	
	The inner product $\langle\cdot, \cdot\rangle_{\mathcal{L}}$  depends on the choice of the subspaces $\mathcal{L}_\pm$ in (\ref{uf2}), which are not determined uniquely.
	Changing $\mathcal{L}_\pm$ results in the construction of various definite inner products and norms, which \emph{are  equivalent
		in the case of a Krein space} $\mathcal{H}$. However, if $\mathcal{H}$ is not a Krein space, distinct decompositions (\ref{uf2}) can result in \emph{non-equivalent norms} on $\mathcal{H}$. Let's take a closer look at this significant point and consider an another decomposition of
	$\mathcal{H}$
	\begin{equation}\label{uf2b}
		\mathcal{H}=\mathcal{M}_+[\dot{+}]\mathcal{M}_-,
	\end{equation}
	where $\mathcal{M}_+$ is a positive subspace of $\mathcal{H}$ and $\mathcal{M}_-$ is a negative subspace of $\mathcal{H}$.

	Denote by $\|\cdot\|_{\mathcal{X}}=\sqrt{\langle\cdot, \cdot\rangle_{\mathcal{X}}}$
	the norm on $\mathcal{H}$ that is generated by the definite inner product
	$\langle\cdot, \cdot\rangle_{\mathcal{X}}$, \ $\mathcal{X}\in\{\mathcal{L}, \mathcal{M}\}$.
	It is easy to see that the operator of fundamental symmetry $J_{\mathcal{X}}$
	satisfies the relations
	\begin{equation}\label{uf220}
		J_{\mathcal{X}}^2f=f, \quad  \|J_{\mathcal{X}}f\|_\mathcal{X}=\|f\|_\mathcal{X}, \quad  [J_{\mathcal{X}}f, g]=[f,J_{\mathcal{X}}g], \quad f, g\in\mathcal{H}.
	\end{equation}

	In what follows, when it is necessary to specify the inner product being used on linear space
	$\mathcal{H}$
	we will use the notation $(\mathcal{H}, \langle\cdot, \cdot\rangle_{\mathcal{X}})$.
	
	\begin{thm}\label{ups131b}The following are equivalent:
		\begin{itemize}
			\item[(i)] the norms $\|\cdot\|_{\mathcal{L}}$ and $\|\cdot\|_{\mathcal{M}}$ are equivalent on $\mathcal{H}$;
			\item[(ii)] the operator of fundamental symmetry $J_{\mathcal{M}}$ associated with the fundamental decomposition (\ref{uf2b}) is bounded in the pre-Hilbert space
			$(\mathcal{H}, \langle\cdot, \cdot\rangle_{\mathcal{L}})$;
			\item[(iii)] the operator of fundamental symmetry $J_{\mathcal{L}}$ associated with the fundamental decomposition (\ref{uf2}) is bounded in the pre-Hilbert space
			$(\mathcal{H}, \langle\cdot, \cdot\rangle_{\mathcal{M}})$.
		\end{itemize}
	\end{thm}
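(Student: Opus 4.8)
\emph{Plan of proof.} The plan is to compress the comparison of the two definite inner products into a single operator and then read each of the three conditions off from that operator. Applying (\ref{bu1}) to both fundamental decompositions---so that $\langle f,g\rangle_{\mathcal{L}}=[J_{\mathcal{L}}f,g]$ and $[f,g]=\langle J_{\mathcal{M}}f,g\rangle_{\mathcal{M}}$, together with the symmetric identities obtained by interchanging $\mathcal{L}$ and $\mathcal{M}$---I would introduce $T:=J_{\mathcal{L}}J_{\mathcal{M}}$ and verify that
\[
\langle f,g\rangle_{\mathcal{M}}=\langle Tf,g\rangle_{\mathcal{L}},\qquad T^{-1}=J_{\mathcal{M}}J_{\mathcal{L}},\qquad \langle f,g\rangle_{\mathcal{L}}=\langle T^{-1}f,g\rangle_{\mathcal{M}},
\]
the middle equality being a consequence of $J_{\mathcal{X}}^2=I$ in (\ref{uf220}). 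From the first relation and the Hermitian symmetry of $\langle\cdot,\cdot\rangle_{\mathcal{L}}$ and $\langle\cdot,\cdot\rangle_{\mathcal{M}}$ one checks that $T$ is symmetric with respect to $\langle\cdot,\cdot\rangle_{\mathcal{L}}$, and it is strictly positive since $\langle Tf,f\rangle_{\mathcal{L}}=\|f\|_{\mathcal{M}}^2>0$ for $f\neq 0$. Thus the whole statement becomes a statement about the boundedness of $T$ and $T^{-1}$ on the pre-Hilbert space $(\mathcal{H},\langle\cdot,\cdot\rangle_{\mathcal{L}})$.

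For (ii)$\Rightarrow$(i) I would argue as follows. Since $J_{\mathcal{L}}$ is $\|\cdot\|_{\mathcal{L}}$-isometric by (\ref{uf220}), boundedness of $J_{\mathcal{M}}$ on $(\mathcal{H},\langle\cdot,\cdot\rangle_{\mathcal{L}})$ forces \emph{both} $T=J_{\mathcal{L}}J_{\mathcal{M}}$ and $T^{-1}=J_{\mathcal{M}}J_{\mathcal{L}}$ to be bounded there. The estimate $\|f\|_{\mathcal{M}}^2=\langle Tf,f\rangle_{\mathcal{L}}\le\|T\|\,\|f\|_{\mathcal{L}}^2$ is immediate, and for the reverse inequality I would apply the Cauchy--Schwarz inequality for the positive Hermitian form $(f,g)\mapsto\langle Tf,g\rangle_{\mathcal{L}}$ to the pair $(f,\,T^{-1}f)$: using $\langle Tf,T^{-1}f\rangle_{\mathcal{L}}=\langle f,f\rangle_{\mathcal{L}}$ (symmetry of $T$) and $\langle f,T^{-1}f\rangle_{\mathcal{L}}=\|T^{-1}f\|_{\mathcal{M}}^2\le\|T^{-1}\|\,\|f\|_{\mathcal{L}}^2$ one obtains $\|f\|_{\mathcal{L}}^4\le\|f\|_{\mathcal{M}}^2\,\|T^{-1}\|\,\|f\|_{\mathcal{L}}^2$, hence $\|f\|_{\mathcal{L}}^2\le\|T^{-1}\|\,\|f\|_{\mathcal{M}}^2$. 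Together with the previous bound this gives the equivalence of the norms. (Alternatively, one may pass to the completion of $(\mathcal{H},\langle\cdot,\cdot\rangle_{\mathcal{L}})$, where $T$ becomes a bounded positive self-adjoint operator with bounded inverse and therefore satisfies $T\ge\varepsilon I$, and then restrict to $\mathcal{H}$.)

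For (i)$\Rightarrow$(ii) I would run the same machinery backwards: the bound $\|f\|_{\mathcal{M}}^2\le b\,\|f\|_{\mathcal{L}}^2$ reads $\langle Tf,f\rangle_{\mathcal{L}}\le b\,\|f\|_{\mathcal{L}}^2$, and the Cauchy--Schwarz inequality for the form $\langle T\cdot,\cdot\rangle_{\mathcal{L}}$ then gives $|\langle Tf,g\rangle_{\mathcal{L}}|\le b\,\|f\|_{\mathcal{L}}\|g\|_{\mathcal{L}}$; choosing $g=Tf$ yields $\|Tf\|_{\mathcal{L}}\le b\,\|f\|_{\mathcal{L}}$, and since $J_{\mathcal{M}}=J_{\mathcal{L}}T$ (again $J_{\mathcal{L}}^2=I$) is a product of $\|\cdot\|_{\mathcal{L}}$-bounded operators, (ii) follows. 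Finally, condition (i) is plainly symmetric under the interchange $\mathcal{L}\leftrightarrow\mathcal{M}$, and repeating the argument with $S:=J_{\mathcal{M}}J_{\mathcal{L}}=T^{-1}$ in place of $T$ (noting $\langle f,g\rangle_{\mathcal{L}}=\langle Sf,g\rangle_{\mathcal{M}}$) gives (i)$\Leftrightarrow$(iii). I expect the genuine difficulty to be exactly the lower norm bound in (ii)$\Rightarrow$(i): a bounded positive operator need not be bounded below, so one must exploit the fact---automatic here---that $T^{-1}=J_{\mathcal{M}}J_{\mathcal{L}}$ is bounded as soon as $J_{\mathcal{M}}$ is, and feed this into the generalized Cauchy--Schwarz inequality; the remaining steps are bookkeeping with (\ref{bu1}) and (\ref{uf220}).
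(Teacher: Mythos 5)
Your proposal is correct, and for the hard implication it takes a genuinely different route from the paper. The easy direction $(i)\Rightarrow(ii)$ is essentially the paper's (the paper does it even more directly, via $\|J_{\mathcal{M}}f\|_{\mathcal{L}}\leq\alpha_1\|J_{\mathcal{M}}f\|_{\mathcal{M}}=\alpha_1\|f\|_{\mathcal{M}}\leq\alpha_1\alpha_2\|f\|_{\mathcal{L}}$, while you detour through $T$; both work, and yours in fact uses only the upper bound $\|\cdot\|_{\mathcal{M}}\leq\text{const}\,\|\cdot\|_{\mathcal{L}}$). The real divergence is in $(ii)\Rightarrow(i)$: the paper passes to the completion $\mathcal{H}_{\mathcal{L}}$, extends $J_{\mathcal{M}}$ by continuity, shows that $\mathcal{M}_\pm'=(I\pm J_{\mathcal{M}})\mathcal{H}_{\mathcal{L}}$ furnish a fundamental decomposition of the Krein space $\mathcal{H}_{\mathcal{L}}$ into genuinely positive and negative subspaces, and then invokes the known theorem (cited from the Albeverio--Kuzhel reference) that all norms arising from fundamental decompositions of a Krein space are equivalent. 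You instead stay entirely on the pre-Hilbert space and prove the two-sided norm bound by hand: the upper bound is Cauchy--Schwarz for $\langle T\cdot,\cdot\rangle_{\mathcal{L}}=\langle\cdot,\cdot\rangle_{\mathcal{M}}$, and the lower bound comes from applying that same Cauchy--Schwarz to the pair $(f,T^{-1}f)$, which is exactly the right trick to convert boundedness of $T^{-1}=J_{\mathcal{M}}J_{\mathcal{L}}$ (automatic once $J_{\mathcal{M}}$ is bounded, since $J_{\mathcal{L}}$ is $\|\cdot\|_{\mathcal{L}}$-isometric) into a lower bound for $T$. I checked the key identities: $\langle Tf,T^{-1}f\rangle_{\mathcal{L}}=\|f\|_{\mathcal{L}}^2$ by symmetry of $T$, and $\langle f,T^{-1}f\rangle_{\mathcal{L}}=\|T^{-1}f\|_{\mathcal{M}}^2\leq\|T^{-1}\|\,\|f\|_{\mathcal{L}}^2$, so the chain $\|f\|_{\mathcal{L}}^4\leq\|f\|_{\mathcal{M}}^2\|T^{-1}\|\,\|f\|_{\mathcal{L}}^2$ is sound. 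Your approach buys self-containedness and elementarity (no completion, no external Krein-space theorem, no argument about neutral vectors in the closures $\mathcal{M}_\pm'$); the paper's approach buys a structural by-product, namely that $\mathcal{H}_{\mathcal{L}}$ with $[\cdot,\cdot]$ is a Krein space admitting $\mathcal{M}_\pm'$ as a fundamental decomposition, which the paper reuses later (e.g.\ in Proposition \ref{ups1} and Theorem \ref{uf53}). The symmetry argument for $(i)\Leftrightarrow(iii)$ matches the paper's.
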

	\begin{proof}
		$(i) \to (ii)$. If the norms are equivalent, then there exist constants
		$\alpha_i>0$ such that $\|\cdot\|_{\mathcal{L}}\leq\alpha_1\|\cdot\|_{\mathcal{M}}$
		and $\|\cdot\|_{\mathcal{M}}\leq\alpha_2\|\cdot\|_{\mathcal{L}}$.  Using the second relation in (\ref{uf220}), we obtain
		$$
		\|J_{\mathcal{M}}f\|_{\mathcal{L}}\leq\alpha_1\|J_{\mathcal{M}}f\|_{\mathcal{M}}=\alpha_1\|f\|_{\mathcal{M}}\leq\alpha_1\alpha_2\|f\|_{\mathcal{L}}, \qquad f\in\mathcal{H}.
		$$
		Therefore, $J_{\mathcal{M}}$  is bounded in
		$(\mathcal{H}, \langle\cdot, \cdot\rangle_{\mathcal{L}})$.
		The implication $(i) \to (iii)$  is established in a similar manner.
		
		$(ii) \to (i)$. Denote by $\mathcal{H}_{\mathcal{X}}$ the Hilbert space obtained by completing the pre-Hilbert space $(\mathcal{H}, \langle\cdot, \cdot\rangle_{\mathcal{X}})$, \ $\mathcal{X}\in\{\mathcal{L}, \mathcal{M}\}$ .
		Since $|[f, f]|\leq\|f\|_{\mathcal{X}}$ for $f\in\mathcal{H}$, the indefinite inner product $[\cdot,\cdot]$ can also be extended on $\mathcal{H}_{\mathcal{X}}$ by the continuity. The Hilbert space $\mathcal{H}_{\mathcal{X}}$, equipped with the indefinite inner product $[\cdot,\cdot]$ is a Krein space.
		
		By virtue of  $(ii)$, the operator $J_\mathcal{M}$  can be continuously extended to the Hilbert space $\mathcal{H}_{\mathcal{L}}$.
		We will use the same notation $J_\mathcal{M}$
		for this extension. Define the subspaces of $\mathcal{H}_{\mathcal{L}}$:
		$$
		\mathcal{M}_+'=(I+J_\mathcal{M})\mathcal{H}_\mathcal{L}, \qquad  \mathcal{M}_-'=(I-J_\mathcal{M})\mathcal{H}_\mathcal{L}
		$$
		By the construction,  $\mathcal{M}_\pm'$ are extensions of the subspaces $\mathcal{M}_\pm$ from the decomposition (\ref{uf2b}) of $\mathcal{H}$. Note that the relations (\ref{uf220})
		admit the extension on $\mathcal{H}_{\mathcal{L}}$, i.e., they hold true for $f\in\mathcal{H}_{\mathcal{L}}$ and for $J_{\mathcal{X}}=J_{\mathcal{M}}$. Using this extended version of (\ref{uf220}) it is easy to see that $\mathcal{M}_+'$ and $\mathcal{M}_-'$ are orthogonal with respect to $[\cdot, \cdot]$ and $\mathcal{M}_+'\cap\mathcal{M}_-'=\{0\}$.
		Moreover,
		\begin{equation}\label{uf63}
			\mathcal{M}_+'[+]\mathcal{M}_-'=\mathcal{H}_{\mathcal{L}}
		\end{equation}
		since an arbitrary $f\in\mathcal{H}_{\mathcal{L}}$ admits the decomposition
		$f=\frac{1}{2}(I+J_\mathcal{M})f+\frac{1}{2}(I-J_\mathcal{M})f=f_++f_-$, where
		the vectors $f_\pm=\frac{1}{2}(I\pm{J}_{\mathcal{M}})f$ belong to $\mathcal{M}_\pm'$.

		By comparing formulas
		$\mathcal{M}_+'=(I+J_\mathcal{M})\mathcal{H}_\mathcal{L}$ and $\mathcal{M}_+=(I+J_\mathcal{M})\mathcal{H}$, we conclude that
		$\mathcal{M}_+'$
		is a completion of $\mathcal{M}_+$
		in the Hilbert space
		$\mathcal{H}_{\mathcal{L}}$.
		Therefore, $\mathcal{M}_+'$ should be a nonnegative subspace with respect to the indefinite inner product $[\cdot, \cdot]$ (since $\mathcal{M}_+$ is positive). Let $g\in\mathcal{M}_+'$ be a neutral vector. Then $[g, f]=0$ for all $f\in\mathcal{M}_+'$.
		In view of (\ref{uf63}), the last equality holds for every $f\in\mathcal{H}_{\mathcal{L}}$. Hence $g=0$ and the subspace $\mathcal{M}_+'$ is positive with respect to $[\cdot,\cdot]$. Similar reasoning indicates that $\mathcal{M}_-'$
		is a negative subspace.
		
		The considerations above demonstrate that the decomposition (\ref{uf63}) consists of positive and negative subspaces.  This decomposition, similar to (\ref{uf3}), defines the norm $\|\cdot\|_{\mathcal{M}}$
		on $\mathcal{H}_{\mathcal{L}}$. As was shown in \cite{AK_AK}, the norm defined in such a way is equivalent to the norm $\|\cdot\|_{\mathcal{L}}$.
		The implication $(iii) \to (i)$  is established in a similar manner.
	\end{proof}
	
	Denote by $\mathcal{H}_{\mathcal{L}}$ and $\mathcal{H}_{\mathcal{M}}$ the Hilbert spaces obtained by completing the pre-Hilbert spaces $(\mathcal{H}, \langle\cdot, \cdot\rangle_{\mathcal{L}})$ and
	$(\mathcal{H}, \langle\cdot, \cdot\rangle_{\mathcal{M}})$,  respectively. Generally, $\mathcal{H}_{\mathcal{L}}\not=\mathcal{H}_{\mathcal{M}}$  and these Hilbert spaces  coincide if the norms $\|\cdot\|_{\mathcal{L}}$ and $\|\cdot\|_{\mathcal{M}}$ are equivalent on $\mathcal{H}$. 
	
	Assume that the norms $\|\cdot\|_{\mathcal{L}}$ and $\|\cdot\|_{\mathcal{M}}$ are equivalent on $\mathcal{H}$ and
	denote $\mathcal{H}_{ext}=\mathcal{H}_{\mathcal{L}}=\mathcal{H}_{\mathcal{M}}$.
	By virtue of Theorem \ref{ups131b}, the operators $J_\mathcal{X}$, $\mathcal{X}\in\{\mathcal{L}, \mathcal{M}\}$ can be continuously extended to $\mathcal{H}_{ext}$.
	We will use the same notation $J_\mathcal{X}$
	for this extension.
	\begin{prop}\label{ups1}
		If the norms $\|\cdot\|_{\mathcal{L}}$ and $\|\cdot\|_{\mathcal{M}}$ are equivalent on $\mathcal{H}$, then
		there exists a bounded self-adjoint operator $Q$ in $(\mathcal{H}_{ext}, \langle\cdot, \cdot\rangle_{\mathcal{X}})$ such that  $J_{\mathcal{X}}Q=-QJ_{\mathcal{X}}$ and
		$$
		J_{\mathcal{L}}J_{\mathcal{M}}=e^Q, \qquad J_{\mathcal{M}}J_{\mathcal{L}}=e^{-Q}
		$$
	\end{prop}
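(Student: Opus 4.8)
The plan is to identify the operator $A:=J_{\mathcal{L}}J_{\mathcal{M}}$ and show it is a uniformly positive, self-adjoint operator with respect to \emph{both} Hilbert inner products, so that $Q:=\log A$ (continuous functional calculus) does the job. Note first that $A$ is bounded on $\mathcal{H}_{ext}$: under the standing hypothesis both $J_{\mathcal{L}}$ and $J_{\mathcal{M}}$ extend to bounded operators on $\mathcal{H}_{ext}$, as recorded just before the statement via Theorem~\ref{ups131b}. The decisive step is the positivity and symmetry of $A$. Using only $J_{\mathcal{X}}^2=I$, the $[\cdot,\cdot]$-self-adjointness of $J_{\mathcal{X}}$ from (\ref{uf220}), and the identity $\langle\cdot,\cdot\rangle_{\mathcal{X}}=[J_{\mathcal{X}}\cdot,\cdot]$ from (\ref{bu1}), one computes $\langle Af,g\rangle_{\mathcal{L}}=[J_{\mathcal{L}}J_{\mathcal{L}}J_{\mathcal{M}}f,g]=[J_{\mathcal{M}}f,g]=\langle f,g\rangle_{\mathcal{M}}$; since $\langle\cdot,\cdot\rangle_{\mathcal{M}}$ is a Hermitian inner product, this shows $A$ is symmetric, hence self-adjoint, in $(\mathcal{H}_{ext},\langle\cdot,\cdot\rangle_{\mathcal{L}})$. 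Taking $g=f$ gives $\langle Af,f\rangle_{\mathcal{L}}=\|f\|_{\mathcal{M}}^2$, and equivalence of the norms provides $c>0$ with $\|f\|_{\mathcal{M}}^2\geq c\|f\|_{\mathcal{L}}^2$, so $A\geq cI$ in $\langle\cdot,\cdot\rangle_{\mathcal{L}}$ and therefore the spectrum of $A$ lies in $[c,\|A\|]\subset(0,\infty)$. Interchanging $\mathcal{L}$ and $\mathcal{M}$ and noting $J_{\mathcal{M}}J_{\mathcal{L}}=A^{-1}$ (again from $J_{\mathcal{X}}^2=I$) yields, by the same computation, that $A$ is self-adjoint and uniformly positive in $(\mathcal{H}_{ext},\langle\cdot,\cdot\rangle_{\mathcal{M}})$ as well.

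Next I would set $Q:=\log A$. Because the spectrum of $A$ is a compact subset of $(0,\infty)$, $\log$ is a real-valued continuous function there, so $Q$ is a bounded operator with $e^{Q}=A=J_{\mathcal{L}}J_{\mathcal{M}}$ and, correspondingly, $J_{\mathcal{M}}J_{\mathcal{L}}=A^{-1}=e^{-Q}$. For self-adjointness of $Q$ in both structures, I would use Weierstrass: $\log$ is a uniform limit of real polynomials $p_n$ on a fixed compact interval of $(0,\infty)$ containing both the spectrum of $A$ and that of $A^{-1}$. Then $p_n(A)\to Q$ in operator norm, and since the two equivalent Hilbert norms define the same bounded-operator topology, each $p_n(A)$—which is self-adjoint with respect to whichever inner product makes $A$ self-adjoint—converges in operator norm to $Q$, so $Q$ is self-adjoint in $(\mathcal{H}_{ext},\langle\cdot,\cdot\rangle_{\mathcal{L}})$ and in $(\mathcal{H}_{ext},\langle\cdot,\cdot\rangle_{\mathcal{M}})$.

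For the anticommutation relation, from $J_{\mathcal{X}}^2=I$ one gets $J_{\mathcal{L}}AJ_{\mathcal{L}}=J_{\mathcal{M}}J_{\mathcal{L}}=A^{-1}$ and likewise $J_{\mathcal{M}}AJ_{\mathcal{M}}=A^{-1}$. The map $T\mapsto J_{\mathcal{X}}TJ_{\mathcal{X}}$ is a norm-continuous algebra automorphism of the bounded operators on $\mathcal{H}_{ext}$, hence carries $p_n(A)$ to $p_n(A^{-1})$; passing to the limit (with $p_n\to\log$ uniformly on the interval above, which also covers the spectrum of $A^{-1}$) gives $J_{\mathcal{X}}QJ_{\mathcal{X}}=\log(A^{-1})=-\log A=-Q$, that is, $J_{\mathcal{X}}Q=-QJ_{\mathcal{X}}$ for $\mathcal{X}\in\{\mathcal{L},\mathcal{M}\}$.

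The only genuine obstacle I anticipate is the simultaneous self-adjointness of $Q$ with respect to both inner products: one must observe that the functional-calculus element $\log A$ is intrinsic to $A$ as a bounded operator on the normed space $\mathcal{H}_{ext}$, so it inherits self-adjointness from $A$ separately in each Hilbert structure. This rests on $A$ being \emph{uniformly} positive, which is precisely the point where equivalence of the norms enters—without it the spectrum of $A$ could accumulate at $0$, $\log$ would fail to be continuous on it, and $\log A$ need not even be bounded.
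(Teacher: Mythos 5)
Your proof is correct and follows the same core route as the paper: both identify $A=J_{\mathcal{L}}J_{\mathcal{M}}$ as a bounded, uniformly positive, self-adjoint operator in $(\mathcal{H}_{ext},\langle\cdot,\cdot\rangle_{\mathcal{L}})$ via the identity $\langle Af,g\rangle_{\mathcal{L}}=\langle f,g\rangle_{\mathcal{M}}$ (the paper's relation (\ref{bu2})) and then set $Q=\log A$ by the functional calculus. The only substantive difference is at the end: for the anticommutation $J_{\mathcal{X}}Q=-QJ_{\mathcal{X}}$ and the self-adjointness of $Q$ in both inner products the paper cites Theorem 6.2.1 of \cite{AK_AK}, whereas you derive these directly from $J_{\mathcal{X}}AJ_{\mathcal{X}}=A^{-1}$ together with uniform polynomial approximation of $\log$ on a compact interval containing $\sigma(A)\cup\sigma(A^{-1})$; this makes your argument self-contained, and it is correct.
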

	\begin{proof}
		By applying (\ref{bu1})  twice (once for
		$J_\mathcal{L}$ and once for $J_{\mathcal{M}}$), we obtain, for $f,g\in\mathcal{H}$,
		\begin{equation}\label{bu2}
			\langle{f}, g\rangle_{\mathcal{L}}=[J_\mathcal{L}f, g]=\langle{J_{\mathcal{M}}}J_{\mathcal{L}}{f}, g\rangle_{{\mathcal{M}}}, \quad \langle{f}, g\rangle_{\mathcal{M}}=[J_{\mathcal{M}}f, g]=\langle{J_{\mathcal{L}}}J_{\mathcal{M}}{f}, g\rangle_{{\mathcal{L}}}
		\end{equation}
		
		In view of Theorem \ref{ups131b},  the relations (\ref{bu2}) can be extended for all $f,g\in\mathcal{H}_{ext}$.  This means that
		$J_{\mathcal{L}}J_{\mathcal{M}}$ is a bounded positive self-adjoint operator in $(\mathcal{H}_{ext}, \langle\cdot, \cdot\rangle_{\mathcal{L}})$ with bounded inverse. Therefore, its spectrum $\sigma(J_{\mathcal{L}}J_{\mathcal{M}})$ is a bounded set  contained within $(0, \infty)$. Using the functional calculus for self-adjoint operators (see, e.g. \cite[Section 5]{Konrad}), one can define a bounded self-adjoint operator $Q=\ln{J_{\mathcal{L}}J_{\mathcal{M}}}$ in $(\mathcal{H}_{ext}, \langle\cdot, \cdot\rangle_{\mathcal{L}})$ so that $J_{\mathcal{L}}J_{\mathcal{M}}=e^Q$.  Then $J_{\mathcal{M}}J_{\mathcal{L}}=e^{-Q}$ since $J_{\mathcal{L}}J_{\mathcal{M}}J_{\mathcal{M}}J_{\mathcal{L}}=J_{\mathcal{M}}J_{\mathcal{L}}J_{\mathcal{L}}J_{\mathcal{M}}=I.$
		
		The space $\mathcal{H}_{ext}$ is a Krein space with the indefinite inner product $[\cdot,\cdot]$ and the completion
		of the subspaces\footnote{It does not matter which norm $\|\cdot\|_{\mathcal{X}}$ we use for making a completion, since they are equivalent in $\mathcal{H}_{ext}$.} from the decompositions
		(\ref{uf2}) and (\ref{uf2b}) give rise to the fundamental decompositions of the Krein space $\mathcal{H}_{ext}$ described by the
		fundamental symmetries $J_{\mathcal{L}}$ and $J_{\mathcal{M}}$. Using now \cite[Theorem 6.2.1]{AK_AK} with $J=J_{\mathcal{L}}$ and $\mathcal{C}=J_{\mathcal{M}}$ we obtain that $Q$ is a bounded self-adjoint operator in the Hilbert space $(\mathcal{H}_{ext}, \langle\cdot, \cdot\rangle_{\mathcal{L}})$ such that $J_{\mathcal{L}}Q=-QJ_{\mathcal{L}}$.
		
		Similarly, by applying \ \cite[Theorem 6.2.1]{AK_AK} with $J=J_{\mathcal{M}}$ and $\mathcal{C}=J_{\mathcal{L}}$, and recalling that $J_{\mathcal{M}}J_{\mathcal{L}}=e^{-Q}$  we arrive at the conclusion that $Q$ is a bounded self-adjoint operator in the Hilbert space $(\mathcal{H}_{ext}, \langle\cdot, \cdot\rangle_{\mathcal{M}})$ such that $J_{\mathcal{M}}Q=-QJ_{\mathcal{M}}$.
	\end{proof}
	
	The following example illustrates a variety of possible decompositions  (\ref{uf2b}) for the simplest case where $\mathcal{H}=\mathbb{C}^2$.
	
	\begin{example}\label{uuu1} Consider an inner product space $\mathcal{H}=\mathbb{C}^2$ with the indefinite inner product
		$$
		[f, g]=\overline{x}_1{y}_2+\overline{x}_2{y}_1, \qquad f=(x_1, x_2), \ g=(y_1, y_2), \quad x_i, y_i\in\mathbb{C}.
		$$
		The space $\mathcal{H}$, endowed with $[\cdot, \cdot]$, is a Krein space. The fundamental decomposition (\ref{uf2}) can be chosen as
		$$
		\mathcal{L}_+=\{f_+=(x, x) : x\in\mathbb{C}\}, \quad \mathcal{L}_-=\{f_-=(x, -x) : x\in\mathbb{C}\}.
		$$
		It is  important to stress that our choice of the initial decomposition (\ref{uf2}) is entirely arbitrary: other choices for $\mathcal{L}_\pm$ can be done, and in fact, as it will appear clearly in what follows, the purpose of this example is to illustrate the vast range of possible decompositions one could adopt given an indefinite inner product space $\mathcal{H}$.

		The corresponding operator of fundamental symmetry ${J}_{\mathcal{L}}$ is defined by
		the Pauli matrix $\sigma_1=\left[\begin{array}{cc}
			0 & 1 \\
			1 & 0 \end{array}\right]$ (we identify operators in $\mathbb{C}^2$ with  $(2\times{2})$-matrices, i.e., $J_\mathcal{L}=\sigma_1$). By virtue of (\ref{bu1}), the inner product $\langle\cdot, \cdot\rangle_{\mathcal{L}}$
		has the form
		$
		\langle{f}, g\rangle_{\mathcal{L}}=[\sigma_1{f},g]=\overline{x}_1{y}_1+\overline{x}_2{y}_2.
		$
		
		Since in finite dimensional Hilbert spaces all norms are equivalent, the assumption of Proposition \ref{ups1} are satisfied, with $J_{\mathcal{L}}=\sigma_1$. Hence we obtain that
		$J_{\mathcal{M}}=\sigma_1e^{Q}$, where $Q$ is a bounded self-adjoint operator in $(\mathcal{H}, \langle\cdot, \cdot\rangle_{\mathcal{L}})$, which anticommutes with $\sigma_1$.
		Let us decompose $Q$ with respect to the basis of Pauli matrices i.e.,
		$$
		Q=q_0\sigma_0+q_1\sigma_{1}+q_2\sigma_{2}+q_3\sigma_3,  \qquad q_j\in\mathbb{C},
		$$
		where $\sigma_0$ is the identity matrix on $\mathbb{C}^2$, $\sigma_2=\left[\begin{array}{cc}
			0 & -i \\
			i & 0 \end{array}\right]$ and $\sigma_3=\left[\begin{array}{cc}
			1 & 0 \\
			0 & -1 \end{array}\right]$.
		The condition of self-adjointness of $Q$ in $(\mathcal{H}, \langle\cdot, \cdot\rangle_{\mathcal{L}})$ implies that $q_j\in\mathbb{R}$.
		Furthermore, the anticommutation with $\sigma_1$ is possible only for $q_0=q_1=0$.
		Hence,
		$$
		Q=q_2\sigma_2+q_3\sigma_3=\rho({\cos\xi}\sigma_{2}+\sin\xi\sigma_3),  \quad \xi\in[0, \pi], \quad \rho\in\mathbb{R},
		$$
		where $\rho=({\sf sgn}\ q_3)\sqrt{q_2^2+q_3^2}$, \ ${\cos\xi}=\frac{q_2}{\sqrt{q_2^2+q_3^2}}$, \ ${\sin\xi}=\frac{|q_3|}{\sqrt{q_2^2+q_3^2}}$.
		
		Denote, $Z=\sigma_2{\cos\xi}+\sigma_3\sin\xi$. It is easy to see that $Z^2=\sigma_0$. Then
		$$
		e^Q=e^{\rho{Z}}=\sigma_0\cosh\rho+Z \sinh\rho,
		$$
		$$J_\mathcal{M}=\sigma_1e^Q=\sigma_{1}e^{\rho{Z}}=\sigma_{1}\cosh\rho-i\sigma_{1}\sinh\rho\sin\xi +i\sigma_{3}\sinh\rho\cos\xi.$$
		
		The operator $J_{\mathcal{M}}$ determines the
		subspaces $\mathcal{M}_\pm$ in the fundamental decomposition
		(\ref{uf2b}), specifically,
		\begin{equation}\label{uf101}
			\mathcal{M_+}=(I+J_\mathcal{M})\mathbb{C}^2, \quad
			\mathcal{M_-}=(I-J_\mathcal{M})\mathbb{C}^2.
		\end{equation}
		It anticommutes with $Q$ since $
		J_{\mathcal{M}}Q=\sigma_1e^QQ=\sigma_1Qe^Q=-Q\sigma_1e^Q=-QJ_{\mathcal{M}}$. Furthermore, $J_{\mathcal{M}}J_{\mathcal{L}}=\sigma_1e^Q\sigma_1=e^{-Q}$ and $J_{\mathcal{L}}J_{\mathcal{M}}=e^Q$.
	\end{example}
	
	At the end of the section, it is worth noting that
	the concept of non-decomposable spaces is less common in mathematical physics. Their distinguishing feature is the presence of an `enormous amount' of neutral vectors.
	To our current knowledge, the most straightforward example of a non-decomposable space was introduced by Savage \cite{Savage} (see also \cite[Example 11.3]{AK_Bognar}): the space $\mathcal{H}$ coincides with space of complex sequences $f=\{x_i\}_{-\infty}^\infty$ that are finite from the left: $x_i=0$ for $i\leq{i}(f)$ endowed with the indefinite inner product
	$$
	[f,g]=\sum\overline{x_i}{y}_{-i-1}, \qquad
	g=\{y_i\}_{-\infty}^\infty.
	$$
	Analyzing Savage's example carried out in \cite{AK_Azizov} yields the following assertion:
	\begin{thm}\label{uf4}
		If $\mathcal{H}$ admits the decomposition as a direct sum $\mathcal{H} = \mathcal{L}_1 \dot{+} \mathcal{L}_2$, where $\mathcal{L}_1$ and $\mathcal{L}_2$ are two non-isomorphic neutral linear subspaces of $\mathcal{H}$, then $\mathcal{H}$ is non-decomposable.
	\end{thm}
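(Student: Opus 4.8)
The plan is to prove the contrapositive: assuming $\mathcal{H}=\mathcal{L}_1\dot{+}\mathcal{L}_2$ is decomposable, show that $\mathcal{L}_1$ and $\mathcal{L}_2$ must be isomorphic as linear spaces. So suppose $\mathcal{H}=\mathcal{L}_+[\dot{+}]\mathcal{L}_-$ is a fundamental decomposition into a positive subspace $\mathcal{L}_+$ and a negative subspace $\mathcal{L}_-$. I would first record the key numerical fact that makes Savage-type examples rigid: each neutral subspace $\mathcal{L}_i$ is, in particular, a \emph{nonnegative} subspace and a \emph{nonpositive} subspace simultaneously, so in the language of indefinite inner product spaces both $\mathcal{L}_1$ and $\mathcal{L}_2$ are maximal (or at least extremal) isotropic-type subspaces. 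The heart of the matter is a dimension-counting / hyperbolic-pair argument: because $[\cdot,\cdot]$ is non-degenerate on $\mathcal{H}$ and $\mathcal{L}_1$ is neutral, the restriction of $[\cdot,\cdot]$ sets up a pairing between $\mathcal{L}_1$ and $\mathcal{H}/\mathcal{L}_1^{[\perp]}$, forcing $\mathcal{L}_1^{[\perp]}\supseteq\mathcal{L}_1$ and, since $\mathcal{H}=\mathcal{L}_1\dot{+}\mathcal{L}_2$, forcing $\mathcal{L}_2$ to provide a complementary space that is in perfect bilinear duality with $\mathcal{L}_1$ via $[\cdot,\cdot]$. Dually the same holds with the roles swapped, and I would extract from this that $\dim\mathcal{L}_1=\dim\mathcal{L}_2$ (in the appropriate cardinal sense), hence $\mathcal{L}_1\cong\mathcal{L}_2$, contradicting the hypothesis.

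The cleaner route, and the one I would actually write, is to use the existence of the fundamental symmetry $J_{\mathcal{L}}$ and the associated definite inner product $\langle\cdot,\cdot\rangle_{\mathcal{L}}=[J_{\mathcal{L}}\cdot,\cdot]$ guaranteed by decomposability. With respect to $\langle\cdot,\cdot\rangle_{\mathcal{L}}$ the space $\mathcal{H}$ is pre-Hilbert, and $J_{\mathcal{L}}$ is a bounded self-adjoint involution. A neutral subspace $\mathcal{N}$ satisfies $[f,f]=0$, i.e.\ $\langle J_{\mathcal{L}}f,f\rangle_{\mathcal{L}}=0$ for all $f\in\mathcal{N}$; writing $f=f_++f_-$ with $f_\pm\in\mathcal{L}_\pm$ this says $\|f_+\|_{\mathcal{L}}=\|f_-\|_{\mathcal{L}}$, so the two coordinate projections $P_\pm:\mathcal{N}\to\mathcal{L}_\pm$ are both injective and norm-preserving up to the identification. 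Therefore $P_+ P_-^{-1}$ is a linear isomorphism from $P_-(\mathcal{N})$ onto $P_+(\mathcal{N})$. Applying this with $\mathcal{N}=\mathcal{L}_1$ and with $\mathcal{N}=\mathcal{L}_2$, and then using $\mathcal{H}=\mathcal{L}_1\dot{+}\mathcal{L}_2=\mathcal{L}_+\dot{+}\mathcal{L}_-$ to see that $P_+$ restricted to $\mathcal{L}_1$ has image of ``half'' the dimension and that $\mathcal{L}_2$ must fill the complementary ``half,'' I would conclude $\dim\mathcal{L}_1=\dim\mathcal{L}_+=\dim\mathcal{L}_-=\dim\mathcal{L}_2$. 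That equality of dimensions gives a linear isomorphism $\mathcal{L}_1\cong\mathcal{L}_2$, which is precisely what the hypothesis forbids; hence no fundamental decomposition can exist and $\mathcal{H}$ is non-decomposable.

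The main obstacle I anticipate is making the dimension-count rigorous in the infinite-dimensional setting without the crutch of completeness: a neutral subspace need not be \emph{maximal} neutral, so a priori $\dim\mathcal{L}_1$ could be strictly smaller than $\dim\mathcal{L}_+$, and the argument that $\mathcal{L}_1\dot{+}\mathcal{L}_2=\mathcal{H}$ forces both to be maximal neutral (and hence of equal, ``half'' dimension) needs care. Concretely, I would argue: if $g\in\mathcal{H}$ is neutral and $[g,\mathcal{L}_1]=0$, then decomposing $g=g_1+g_2$ along $\mathcal{L}_1\dot{+}\mathcal{L}_2$ and pairing against all of $\mathcal{L}_1$ shows $g_2$ is $[\cdot,\cdot]$-orthogonal to $\mathcal{L}_1$; combined with non-degeneracy one deduces $\mathcal{L}_1$ is maximal neutral, and likewise $\mathcal{L}_2$. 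Two maximal neutral subspaces that are in $[\cdot,\cdot]$-duality with each other have equal dimension (the duality $[\cdot,\cdot]:\mathcal{L}_1\times\mathcal{L}_2\to\mathbb{C}$ is non-degenerate on both sides, giving injections $\mathcal{L}_1\hookrightarrow\mathcal{L}_2^{*}$ and $\mathcal{L}_2\hookrightarrow\mathcal{L}_1^{*}$ whose images are ``total,'' forcing equal cardinality of any basis). Once equal dimension is in hand, picking any linear bijection between bases yields the isomorphism $\mathcal{L}_1\cong\mathcal{L}_2$, completing the contrapositive. I would also remark that this recovers Savage's original example, where $\mathcal{L}_1$ (sequences supported on nonnegative indices) and $\mathcal{L}_2$ (supported on negative indices) are neutral but of genuinely different ``sizes'' in the relevant sense, so the theorem applies.
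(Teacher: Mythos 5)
Your overall strategy (prove the contrapositive: a fundamental decomposition $\mathcal{H}=\mathcal{L}_+[\dot{+}]\mathcal{L}_-$ forces $\dim\mathcal{L}_1=\dim\mathcal{L}_2$) is the right one, and your first half is sound: for a neutral $f=f_++f_-$ one has $\|f_+\|_{\mathcal{L}}=\|f_-\|_{\mathcal{L}}$, so $P_\pm$ restricted to each $\mathcal{L}_i$ is injective and $\dim\mathcal{L}_i\leq\dim\mathcal{L}_\pm$. The gap is in the reverse inequalities. The ``fills the complementary half'' count is meaningless for infinite cardinals ($\kappa+\lambda=\max(\kappa,\lambda)$), and the lemma you fall back on --- that a pairing $[\cdot,\cdot]:\mathcal{L}_1\times\mathcal{L}_2\to\mathbb{C}$ which is non-degenerate on both sides forces equal dimension --- is false: the canonical pairing between $\bigoplus_{n}\mathbb{C}$ and $\prod_{n}\mathbb{C}$ is non-degenerate on both sides but the dimensions are $\aleph_0$ and $2^{\aleph_0}$. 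Worse, that pairing is exactly the one occurring in Savage's example, which you cite at the end as having neutral summands ``of genuinely different sizes''; and note that the non-degeneracy of $[\cdot,\cdot]$ on $\mathcal{L}_1\times\mathcal{L}_2$ follows from $\mathcal{H}=\mathcal{L}_1\dot{+}\mathcal{L}_2$ and non-degeneracy of $\mathcal{H}$ alone, with no use of decomposability. So if your lemma were true the hypothesis of the theorem could never be satisfied and the statement would be vacuous --- your argument proves too much.

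The missing idea is the symmetric injection. Let $Q_1$ be the projection onto $\mathcal{L}_1$ along $\mathcal{L}_2$. Its kernel is $\mathcal{L}_2$, and a definite subspace meets a neutral one only in $\{0\}$, so $\mathcal{L}_+\cap\mathcal{L}_2=\mathcal{L}_-\cap\mathcal{L}_2=\{0\}$ and $Q_1$ restricted to $\mathcal{L}_\pm$ is injective; hence $\dim\mathcal{L}_\pm\leq\dim\mathcal{L}_1$, and likewise $\dim\mathcal{L}_\pm\leq\dim\mathcal{L}_2$ using the projection along $\mathcal{L}_1$. Combined with your inequalities $\dim\mathcal{L}_i\leq\dim\mathcal{L}_\pm$ (for which the trivial intersections $\mathcal{L}_i\cap\mathcal{L}_\mp=\{0\}$ already suffice), Cantor--Schr\"oder--Bernstein gives $\dim\mathcal{L}_1=\dim\mathcal{L}_+=\dim\mathcal{L}_-=\dim\mathcal{L}_2$, the desired contradiction. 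For what it is worth, the paper itself offers no proof of this theorem --- it only asserts that it follows from the analysis of Savage's example in the Azizov--Iokhvidov monograph --- so your instinct to supply an elementary argument is reasonable; it just needs the projection-along-the-other-summand step rather than the duality lemma.
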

	Theorem \ref{uf4} facilitates the construction of various non-decomposable spaces. For instance, consider the space $\mathcal{H}$ , which consists of  functions $f$ defined on $[-a, a]$ such that  $f(x)$ is a polynomial for $-a\leq{x}<0$ and $f(x)\in{L_2(0,a)}$ for $x\geq{0}$.
	The space $\mathcal{H}$ endowed with  $\mathcal{PT}$ inner product
	$$
	[f, g]=\int_{-a}^a\overline{f(-x)}{g(x)}dx
	$$
	is non-decomposable because one can choose $\mathcal{L}_1$
	as the set of polynomials with supports in $[-a, 0]$ while $\mathcal{L}_2=L_2(0, a)$. This  example, along with many similar ones, suggests that a non-decomposable space can be extended to a decomposable one. In the remainder of this paper, as already stressed, we will focus exclusively on decomposable spaces.

	\subsection{The group of unitary operators in a Krein space $\mathcal{H}_{ext}$.}\label{aa1} Consider a one-parameter group of linear operators $U=\{U(t) : t\in\mathbb{R}\}$ determined by the formula (\ref{e4}).
	By construction, the operators $U(t)$ are unitary on a linear space $\mathcal{H}$ with respect to the indefinite inner product $[\cdot, \cdot]$, see (\ref{uf12}).  However, we cannot be certain that these operators are bounded in the pre-Hilbert space $(\mathcal{H}, \langle\cdot, \cdot\rangle_{\mathcal{L}})$.
	To analyze this problem we consider, for every $t\in\mathbb{R}$,  a fundamental decomposition
	\begin{equation}\label{uf15}
		\mathcal{H}=\mathcal{L}_+^t[\dot{+}]\mathcal{L}_-^t,
	\end{equation}
	where the subspace $\mathcal{L}_+^t=U(t)\mathcal{L}_+$ is positive,  while $\mathcal{L}_-^t=U(t)\mathcal{L}_-$ is negative and $\mathcal{L}_\pm$ represent the positive/negative subspaces from the decomposition (\ref{uf2}). In particular, since $U(0)=I$, we have $\mathcal{L}_\pm^0=\mathcal{L}_\pm$. This shows that (\ref{uf15}) can be seen as a {\em time evoluted version} of (\ref{uf2}).

	For each $t\not=0$, the decomposition (\ref{uf15}) determines, by analogy with (\ref{uf3}), an inner product $\langle\cdot,\cdot\rangle_t$ and a norm  $\|\cdot\|_t$ on $\mathcal{H}$. 
	The operator of fundamental symmetry $J_t$ associated with (\ref{uf15}) satisfies the relation
	\begin{equation}\label{uf20}
		J_{t}U(t)=U(t)J_{\mathcal{L}}, \end{equation}
	{where $J_{\mathcal{L}}$ is the fundamental symmetry for $t=0$.}
	\begin{lemma}\label{new1}
		The following are equivalent for every $t\not=0$:
		\begin{itemize}
			\item[(i)] operators $U(t)$ and $U^{-1}(t)$ are bounded in the pre-Hilbert space $(\mathcal{H}, \langle\cdot, \cdot\rangle_{\mathcal{L}})$;
			\item[(ii)] the norms $\|\cdot\|_{\mathcal{L}}$ and $\|\cdot\|_{t}$ are equivalent on $\mathcal{H}$.
		\end{itemize}
	\end{lemma}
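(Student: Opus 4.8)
The plan is to show that $U(t)$ is an isometric bijection from the pre‑Hilbert space $(\mathcal{H},\langle\cdot,\cdot\rangle_{\mathcal{L}})$ onto $(\mathcal{H},\langle\cdot,\cdot\rangle_{t})$; once this intertwining is in hand, the equivalence of (i) and (ii) is just a matter of rewriting the two defining inequalities. First I would record that (\ref{uf15}) is a genuine fundamental decomposition: since $U(t)$ is fully invertible and unitary with respect to $[\cdot,\cdot]$ (see (\ref{uf12})), it carries positive subspaces to positive subspaces, negative to negative, preserves $[\cdot,\cdot]$‑orthogonality, and is onto, so $\mathcal{L}_\pm^{t}=U(t)\mathcal{L}_\pm$ do determine a well‑defined fundamental symmetry $J_t$ satisfying (\ref{uf20}); moreover, applying (\ref{bu1}) to the decomposition (\ref{uf15}) gives $\langle f,g\rangle_{t}=[J_t f,g]$ for all $f,g\in\mathcal{H}$.

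The core computation is then, for $f\in\mathcal{H}$,
\[
\langle U(t)f, U(t)f\rangle_{t}=[J_t U(t)f,\,U(t)f]=[U(t)J_{\mathcal{L}}f,\,U(t)f]=[J_{\mathcal{L}}f,\,f]=\langle f,f\rangle_{\mathcal{L}},
\]
using successively $\langle\cdot,\cdot\rangle_{t}=[J_t\cdot,\cdot]$, the intertwining relation (\ref{uf20}), the $[\cdot,\cdot]$‑unitarity (\ref{uf12}) of $U(t)$, and finally (\ref{bu1}). Hence $\|U(t)f\|_{t}=\|f\|_{\mathcal{L}}$ for every $f\in\mathcal{H}$, and replacing $f$ by $U^{-1}(t)g$ gives the equivalent form $\|g\|_{t}=\|U^{-1}(t)g\|_{\mathcal{L}}$ for every $g\in\mathcal{H}$.

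To close the loop, note that (ii) means there are constants $\alpha_1,\alpha_2>0$ with $\|f\|_{\mathcal{L}}\le\alpha_1\|f\|_{t}$ and $\|f\|_{t}\le\alpha_2\|f\|_{\mathcal{L}}$ for all $f$. Via $\|f\|_{t}=\|U^{-1}(t)f\|_{\mathcal{L}}$, the second inequality is exactly $\|U^{-1}(t)f\|_{\mathcal{L}}\le\alpha_2\|f\|_{\mathcal{L}}$, i.e. boundedness of $U^{-1}(t)$ in $(\mathcal{H},\langle\cdot,\cdot\rangle_{\mathcal{L}})$; and the first inequality, after substituting $f\mapsto U(t)g$ and using $\|U(t)g\|_{t}=\|g\|_{\mathcal{L}}$, becomes $\|U(t)g\|_{\mathcal{L}}\le\alpha_1\|g\|_{\mathcal{L}}$, i.e. boundedness of $U(t)$. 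Every step here is reversible, which yields (i)$\Leftrightarrow$(ii). There is no real obstacle beyond this bookkeeping: the one identity that genuinely needs justification is the intertwining $J_tU(t)=U(t)J_{\mathcal{L}}$, but that is precisely (\ref{uf20}), a direct consequence of $\mathcal{L}_\pm^{t}=U(t)\mathcal{L}_\pm$ together with the characterization of a fundamental symmetry by its $\pm 1$‑eigenspaces.
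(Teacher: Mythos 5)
Your proof is correct, and for one of the two implications it takes a genuinely different (and more elementary) route than the paper. The key identity $\|U(t)f\|_{t}=[J_tU(t)f,U(t)f]=[U(t)J_{\mathcal{L}}f,U(t)f]=[J_{\mathcal{L}}f,f]=\|f\|_{\mathcal{L}}^{2}$ is exactly the paper's equation (\ref{ups14}), and your derivation of (ii)$\Rightarrow$(i) from it coincides with the paper's. For (i)$\Rightarrow$(ii), however, the paper does not reverse these steps: it instead observes that $J_t=U(t)J_{\mathcal{L}}U^{-1}(t)$ is bounded whenever $U(t)$ and $U^{-1}(t)$ are, and then invokes Theorem \ref{ups131b} (the equivalence of norm equivalence with boundedness of the second fundamental symmetry), which rests on a completion argument in the Krein space $\mathcal{H}_{\mathcal{L}}$. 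You bypass that machinery entirely: since $U(t)$ is fully invertible, the isometry $\|U(t)f\|_{t}=\|f\|_{\mathcal{L}}$ (equivalently $\|g\|_{t}=\|U^{-1}(t)g\|_{\mathcal{L}}$) converts the two boundedness estimates directly into the two norm inequalities. Both arguments are valid; yours is self-contained and shorter, while the paper's has the advantage of routing everything through the boundedness of $J_t$, which is the quantity that reappears in Corollary \ref{uf64b} and Theorem \ref{uf53}. Your preliminary remark that (\ref{uf15}) is a genuine fundamental decomposition (because a $[\cdot,\cdot]$-unitary bijection maps positive subspaces to positive ones and preserves orthogonality) is a point the paper leaves implicit, and it is worth stating as you do, since it is what makes $J_t$ and $\|\cdot\|_t$ well defined.
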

	\begin{proof} $(i)\to(ii)$. Using (\ref{uf20}), we get
		$$
		\|J_t\|_{\mathcal{L}}=\|U(t)J_{\mathcal{L}}U^{-1}(t)\|_{\mathcal{L}}\leq\|U(t)\|_{\mathcal{L}}\|J_{\mathcal{L}}\|_{\mathcal{L}}\|U^{-1}(t)\|_{\mathcal{L}}=\|U(t)\|_{\mathcal{L}}\|U^{-1}(t)\|_{\mathcal{L}}<\infty
		$$
		i.e., the operator $J_t$ is bounded in $(\mathcal{H}, \langle\cdot, \cdot\rangle_{{\mathcal{L}}})$.
		By virtue of Theorem \ref{ups131b}, the norms $\|\cdot\|_{\mathcal{L}}$ and $\|\cdot\|_t$ are equivalent on $\mathcal{H}$.
		
		$(ii)\to(i)$. Applying (\ref{bu2}) with $J_{\mathcal{M}}=J_t$, and taking (\ref{uf12}) and (\ref{uf20}) into account, we obtain for every  $f\in\mathcal{H}$
		\begin{equation}\label{ups14}
			\|U(t)f\|_t^2=[J_tU(t)f, U(t)f]=[U(t)J_{\mathcal{L}}f, U(t)f]=[J_{\mathcal{L}}f, f]=\|f\|^2_{\mathcal{L}}.
		\end{equation}
		
		Since the norms $\|\cdot\|_{\mathcal{L}}$ and $\|\cdot\|_t$ are equivalent,  there exist constants
		$\alpha_i>0$ such that $\|\cdot\|_{\mathcal{L}}\leq\alpha_1\|\cdot\|_{t}$
		and $\|\cdot\|_{t}\leq\alpha_2\|\cdot\|_{\mathcal{L}}$. In this case, by virtue of (\ref{ups14}),
		$$
		\|U(t)f\|_{\mathcal{L}}\leq\alpha_1\|U(t)f\|_t=\alpha_1\|f\|_{\mathcal{L}}, \qquad  f\in\mathcal{H}.
		$$
		The obtained inequality yields the boundedness of $U(t)$ in  $(\mathcal{H}, \langle\cdot, \cdot\rangle_{{\mathcal{L}}})$.
		
		Setting $g=U(t)f$ in  (\ref{ups14}) and taking into account that  $U(t)$ is fully invertible on $\mathcal{H}$, we obtain
		$$
		\|U^{-1}(t)g\|_{\mathcal{L}}=\|g\|_t\leq\alpha_2\|g\|_{\mathcal{L}}, \qquad
		g\in\mathcal{H}.
		$$
		This means that $U^{-1}(t)$ is  bounded in $(\mathcal{H}, \langle\cdot, \cdot\rangle_{{\mathcal{L}}})$.
	\end{proof}
	
	By Lemma \ref{new1}, the boundedness of the operators in the group $U$ can be established by verifying the equivalence of the norms $\|\cdot\|_{\mathcal{L}}$ and $\|\cdot\|_{t}$.
	However, this condition is difficult to verify directly and can be reformulated using operators of fundamental symmetry.
	
	\begin{cor}\label{uf64b}
		If operators of fundamental symmetry $J_t$ associated with decomposition (\ref{uf15}) are bounded in the pre-Hilbert space $(\mathcal{H}, \langle\cdot, \cdot\rangle_{\mathcal{L}})$ for $t>0$, then all operators of the  group $U$ are bounded in  $(\mathcal{H}, \langle\cdot, \cdot\rangle_{{\mathcal{L}}})$.
	\end{cor}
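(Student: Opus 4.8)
The plan is to deduce Corollary \ref{uf64b} directly from Lemma \ref{new1} by showing that the hypothesis --- boundedness of $J_t$ in $(\mathcal{H}, \langle\cdot, \cdot\rangle_{\mathcal{L}})$ for every $t>0$ --- forces condition $(ii)$ of that lemma, namely the equivalence of the norms $\|\cdot\|_{\mathcal{L}}$ and $\|\cdot\|_t$ for every $t\neq 0$. First I would handle the case $t>0$: here $J_t$ is, by assumption, a bounded operator in the pre-Hilbert space $(\mathcal{H}, \langle\cdot, \cdot\rangle_{\mathcal{L}})$, and $J_t$ is precisely the operator of fundamental symmetry associated with the fundamental decomposition (\ref{uf15}), which plays the role of the decomposition (\ref{uf2b}) with $\mathcal{M}_\pm = \mathcal{L}_\pm^t$ and $J_{\mathcal{M}} = J_t$. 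Applying the implication $(ii)\to(i)$ of Theorem \ref{ups131b} then yields immediately that $\|\cdot\|_{\mathcal{L}}$ and $\|\cdot\|_t$ are equivalent, and Lemma \ref{new1} gives the boundedness of $U(t)$ and $U^{-1}(t)=U(-t)$ in $(\mathcal{H}, \langle\cdot, \cdot\rangle_{\mathcal{L}})$.

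The remaining point is to cover the operators $U(t)$ with $t<0$, i.e. to see that boundedness of $J_t$ for all \emph{positive} $t$ already controls the whole group. But this is free: the previous paragraph shows that for each $t>0$ both $U(t)$ and $U(-t)$ are bounded in $(\mathcal{H}, \langle\cdot, \cdot\rangle_{\mathcal{L}})$, so as $t$ ranges over $(0,\infty)$ we exhaust all nonzero parameters $\pm t$, and $U(0)=I$ is trivially bounded. Hence every operator of the group $U=\{U(t):t\in\mathbb{R}\}$ is bounded in $(\mathcal{H}, \langle\cdot, \cdot\rangle_{\mathcal{L}})$, which is the assertion.

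There is essentially no hard analytic step here; the corollary is a bookkeeping consequence of Lemma \ref{new1} and Theorem \ref{ups131b}. The one thing worth stating carefully is the identification of $J_t$ with a fundamental symmetry of the form occurring in Theorem \ref{ups131b}: one must note that $\mathcal{L}_+^t = U(t)\mathcal{L}_+$ is indeed a positive subspace and $\mathcal{L}_-^t = U(t)\mathcal{L}_-$ a negative subspace (because $U(t)$ is unitary with respect to $[\cdot,\cdot]$, by (\ref{uf12}), hence preserves the sign of $[f,f]$ and the $[\cdot,\cdot]$-orthogonality of $\mathcal{L}_+$ and $\mathcal{L}_-$), so that (\ref{uf15}) is a genuine fundamental decomposition and $J_t$ satisfies the relations (\ref{uf220}) required to invoke Theorem \ref{ups131b}. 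Once that is in place, the chain ``$J_t$ bounded in $(\mathcal{H},\langle\cdot,\cdot\rangle_{\mathcal{L}})$ $\Rightarrow$ $\|\cdot\|_{\mathcal{L}}\sim\|\cdot\|_t$ $\Rightarrow$ $U(t),U(-t)$ bounded in $(\mathcal{H},\langle\cdot,\cdot\rangle_{\mathcal{L}})$'' runs automatically for all $t>0$ and finishes the proof.
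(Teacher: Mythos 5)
Your proposal is correct and follows essentially the same route as the paper: apply Theorem \ref{ups131b} with $J_{\mathcal{M}}=J_t$ to get equivalence of $\|\cdot\|_{\mathcal{L}}$ and $\|\cdot\|_t$, invoke Lemma \ref{new1} to bound $U(t)$ and $U^{-1}(t)$, and use $U(-t)=U^{-1}(t)$ to cover negative parameters. Your extra remark verifying that (\ref{uf15}) is a genuine fundamental decomposition is a sensible piece of bookkeeping the paper leaves implicit.
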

	\begin{proof}
		Assuming the conditions of Corollary \ref{uf64b} are satisfied and using Theorem \ref{ups131b} with  $J_\mathcal{M}=J_t$ we derive the equivalence of the norms $\|\cdot\|_{\mathcal{L}}$ and $\|\cdot\|_t$ for $t>0$. By Lemma \ref{new1}, operators $U(t)$ and $U^{-1}(t)$ are bounded in $(\mathcal{H}, \langle\cdot, \cdot\rangle_{\mathcal{L}})$ for  $t>0$. Taking the relation $U^{-1}(t)=U(-t)$
		into account, we conclude the proof. 
	\end{proof}
	
	Consider the Hilbert space $(\mathcal{H}_{ext}, \langle\cdot, \cdot\rangle_{{\mathcal{L}}})$ obtained by completing $(\mathcal{H}, \langle\cdot, \cdot\rangle_{\mathcal{L}})$  with respect to the norm $\|\cdot\|_{\mathcal{L}}$. It should be noted that $(\mathcal{H}_{ext}, \langle\cdot, \cdot\rangle_{{\mathcal{L}}})$ can be also considered as a Krein space $(\mathcal{H}_{ext}, [\cdot,\cdot])$ with indefinite inner product $[\cdot, \cdot]=\langle{J_{\mathcal{L}}\cdot}, \cdot\rangle_{\mathcal{L}}$, where $J_{\mathcal{L}}$
	is the extension by continuity in
	$\mathcal{H}_{ext}$
	of the operator of fundamental symmetry $J_\mathcal{L}$ associated with (\ref{uf2}).

	Assume that the conditions of Corollary \ref{uf64b} are satisfied, i.e., all operators of fundamental symmetry $J_t$ ($t>0$) associated with (\ref{uf15}) are bounded in  $(\mathcal{H}, \langle\cdot, \cdot\rangle_{\mathcal{L}})$. Then operators $U(t)\in{U}$ can be extended by continuity to bounded operators acting in the Hilbert space $(\mathcal{H}_{ext}, \langle\cdot, \cdot\rangle_{{\mathcal{L}}})$. Denote by $U_{ext}=\{U(t) : t\in\mathbb{R}\}$ the corresponding group of operators. It follows from (\ref{uf12}) that each $U(t)\in{U}_{ext}$
	is a unitary operator in the Krein space $(\mathcal{H}_{ext}, [\cdot,\cdot])$.

	The following result clarifies the general form of operators that are unitary in the Krein space $(\mathcal{H}_{ext}, [\cdot,\cdot])$. An alternative description can be found in \cite[Theorem 5.10, Chapter II]{AK_Azizov}.

	\begin{prop}\label{uf50}
		
		Each operator $U(t)$ of the group $U_{ext}$ has the form
		\begin{equation}\label{ups47}
			U(t)=e^{-Q_t/2}Y_t, \qquad t\in\mathbb{R},
		\end{equation}
		where $Q_t$  is a bounded self-adjoint operator, and $Y_t$
		is a unitary operator, both defined in the Hilbert space
		$(\mathcal{H}_{ext}, \langle\cdot, \cdot\rangle_{\mathcal{L}})$ and such that
		$$
			J_{\mathcal{L}}Q_t=-Q_tJ_{\mathcal{L}}, \qquad  J_{\mathcal{L}}Y_t=Y_tJ_{\mathcal{L}}.
		$$
	\end{prop}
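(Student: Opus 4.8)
The statement is essentially the polar-decomposition theorem for Krein-space unitary operators, so the plan is to obtain $U(t)$ as the product of a ``hyperbolic'' self-adjoint factor and a Hilbert-space unitary factor, each behaving correctly with respect to $J_{\mathcal{L}}$. Fix $t\in\mathbb{R}$ and write $U=U(t)$ for brevity. The first step is to consider the Hilbert-space adjoint $U^*$ of $U$ in $(\mathcal{H}_{ext},\langle\cdot,\cdot\rangle_{\mathcal{L}})$. Since $U$ is unitary in the Krein space $(\mathcal{H}_{ext},[\cdot,\cdot])$, we have $[Uf,Ug]=[f,g]$, and using $[\cdot,\cdot]=\langle J_{\mathcal{L}}\cdot,\cdot\rangle_{\mathcal{L}}$ this reads $\langle J_{\mathcal{L}}Uf,Ug\rangle_{\mathcal{L}}=\langle J_{\mathcal{L}}f,g\rangle_{\mathcal{L}}$, i.e. $U^*J_{\mathcal{L}}U=J_{\mathcal{L}}$, equivalently $U^*=J_{\mathcal{L}}U^{-1}J_{\mathcal{L}}$. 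In particular $U^*$ is bounded with bounded inverse, so $U$ is a bounded invertible operator on the Hilbert space.

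\textbf{The positive factor.} Next I would form the bounded positive operator $P=U^*U$; it is self-adjoint and, being an invertible positive operator, has spectrum in a compact subset of $(0,\infty)$. Hence by the functional calculus (as in Proposition \ref{ups1}) one can define $Q_t=\ln P=\ln(U^*U)$, a bounded self-adjoint operator in $(\mathcal{H}_{ext},\langle\cdot,\cdot\rangle_{\mathcal{L}})$, and set the positive factor $e^{Q_t/2}=(U^*U)^{1/2}=|U|$. Then $Y_t:=U\,e^{-Q_t/2}=U|U|^{-1}$ satisfies $Y_t^*Y_t=|U|^{-1}U^*U|U|^{-1}=|U|^{-1}|U|^2|U|^{-1}=I$, and since everything in sight is invertible, $Y_t$ is unitary in $(\mathcal{H}_{ext},\langle\cdot,\cdot\rangle_{\mathcal{L}})$. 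This gives the decomposition $U(t)=Y_t e^{-Q_t/2}$; writing it instead as $U(t)=e^{-Q_t/2}Y_t$ with a (possibly different but analogously constructed) $Q_t$ is just the other polar decomposition $U=|U^*|^{1/2}\cdot(\text{unitary})$, obtained the same way from $UU^*$ — I would fix one convention and note the other is symmetric.

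\textbf{Commutation with $J_{\mathcal{L}}$ --- the main point.} The heart of the argument is showing $J_{\mathcal{L}}Q_t=-Q_tJ_{\mathcal{L}}$ and $J_{\mathcal{L}}Y_t=Y_tJ_{\mathcal{L}}$. Starting from $U^*J_{\mathcal{L}}U=J_{\mathcal{L}}$ I get $U^*U=J_{\mathcal{L}}U^{-1}J_{\mathcal{L}}\cdot U = J_{\mathcal{L}}(U^{-1}J_{\mathcal{L}}U)$. A cleaner route: from $U^*=J_{\mathcal{L}}U^{-1}J_{\mathcal{L}}$ and $J_{\mathcal{L}}^2=I$ one computes $J_{\mathcal{L}}(U^*U)J_{\mathcal{L}}=(J_{\mathcal{L}}U^*J_{\mathcal{L}})(J_{\mathcal{L}}UJ_{\mathcal{L}})=U^{-1}(J_{\mathcal{L}}UJ_{\mathcal{L}})$. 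Now $J_{\mathcal{L}}UJ_{\mathcal{L}} = (J_{\mathcal{L}}U^{-1}J_{\mathcal{L}})^{-1}=(U^*)^{-1}=(U^{-1})^*=(U^*)^{-1}$, so $J_{\mathcal{L}}(U^*U)J_{\mathcal{L}}=U^{-1}(U^*)^{-1}=(U^*U)^{-1}$. Thus $J_{\mathcal{L}}P J_{\mathcal{L}}=P^{-1}$, and applying the functional calculus (since $\ln$ is an odd function, $\ln(P^{-1})=-\ln P$) yields $J_{\mathcal{L}}Q_tJ_{\mathcal{L}}=-Q_t$, i.e. $J_{\mathcal{L}}Q_t=-Q_tJ_{\mathcal{L}}$. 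Consequently $J_{\mathcal{L}}e^{-Q_t/2}J_{\mathcal{L}}=e^{Q_t/2}$, and then $J_{\mathcal{L}}Y_tJ_{\mathcal{L}}=J_{\mathcal{L}}U e^{-Q_t/2}J_{\mathcal{L}}=(J_{\mathcal{L}}UJ_{\mathcal{L}})(J_{\mathcal{L}}e^{-Q_t/2}J_{\mathcal{L}})=(U^*)^{-1}e^{Q_t/2}$; using $(U^*)^{-1}=U(U^*U)^{-1}=U e^{-Q_t}$ this equals $Ue^{-Q_t}e^{Q_t/2}=Ue^{-Q_t/2}=Y_t$, giving $J_{\mathcal{L}}Y_t=Y_tJ_{\mathcal{L}}$. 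The main obstacle I anticipate is purely bookkeeping: keeping the two conventions $e^{-Q_t/2}Y_t$ versus $Y_te^{-Q_t/2}$ straight and making sure the functional-calculus identity $J_{\mathcal{L}}f(P)J_{\mathcal{L}}=f(J_{\mathcal{L}}PJ_{\mathcal{L}})$ is invoked correctly; alternatively, one may simply cite \cite[Theorem 6.2.1]{AK_AK} (used already in Proposition \ref{ups1}) applied with $J=J_{\mathcal{L}}$ and the relevant $J$-unitary operator, which packages exactly this decomposition, and then transfer the statement to the $e^{-Q_t/2}Y_t$ ordering.
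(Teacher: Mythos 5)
Your proof is correct, but it takes a genuinely different route from the paper. You build the decomposition from the standard Hilbert-space polar decomposition of the bounded invertible operator $U(t)$: from Krein-unitarity you extract $U^*J_{\mathcal{L}}U=J_{\mathcal{L}}$, set $Q_t$ to be the logarithm of the positive invertible factor, and derive the anticommutation $J_{\mathcal{L}}Q_t=-Q_tJ_{\mathcal{L}}$ from the identity $J_{\mathcal{L}}(U^*U)J_{\mathcal{L}}=(U^*U)^{-1}$ via the functional calculus. The paper instead obtains $Q_t$ by applying Proposition \ref{ups1} to the pair $(J_{\mathcal{L}},J_t)$, where $J_t$ is the fundamental symmetry of the time-evolved decomposition (\ref{uf15}); the key identity is $J_{\mathcal{L}}J_t=e^{Q_t}$, and the unitarity of $Y_t=e^{Q_t/2}U(t)$ is read off from the norm identities $\|U(t)f\|_t=\|f\|_{\mathcal{L}}$ and $\|f\|_t=\|e^{Q_t/2}f\|_{\mathcal{L}}$. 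The two constructions in fact produce the same operator ($e^{Q_t}=J_{\mathcal{L}}J_t=(U(t)U(t)^*)^{-1}$), so this is purely a difference of route. Yours is more self-contained and elementary, needing only the polar decomposition and the observation that conjugation by the self-adjoint involution $J_{\mathcal{L}}$ commutes with the functional calculus; the paper's version buys the explicit link between $Q_t$ and the evolved fundamental symmetry $J_t$, which is exactly what is exploited in Theorem \ref{uf53} to convert the uniform bound on $\{J_t\}$ into a uniform bound on $\{e^{-Q_t}\}$. Two cosmetic points: the phrase ``$\ln$ is an odd function'' should be ``$\ln(P^{-1})=-\ln P$ for positive invertible $P$''; and you should commit at the outset to the left polar decomposition $U=(UU^*)^{1/2}\,Y_t$ with $Q_t=-\ln(UU^*)$, which delivers the stated ordering $U(t)=e^{-Q_t/2}Y_t$ directly rather than via the remark about switching conventions.
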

	\begin{proof} By setting $J_t = J_\mathcal{M}$ in Proposition \ref{ups1} and using relation (\ref{bu2}), we establish the existence of a bounded self-adjoint operator  $Q_t$ in $(\mathcal{H}_{ext}, \langle\cdot, \cdot\rangle_{{\mathcal{L}}})$ such that $J_{\mathcal{L}}Q_t=-Q_tJ_{\mathcal{L}}$ and $J_{\mathcal{L}}J_t=e^{Q_t}$. Furthermore,
		\begin{equation}\label{ups53}
			\|{f}\|_{t}^2=[J_tf,f]=\langle{J_{\mathcal{L}}J_tf, f\rangle}_{\mathcal{L}}=\langle{e^{Q_t}}f, f\rangle_{{\mathcal{L}}}=\|{e^{Q_t/2}f}\|_{\mathcal{L}}^2, \quad f\in\mathcal{H}_{ext}.
		\end{equation}
		It follows from (\ref{ups14}) and (\ref{ups53}),
		$$
		\|U(t)f\|_t=\|e^{Q_t/2}U(t)f\|_{\mathcal{L}}=\|f\|_{\mathcal{L}}, \qquad f\in\mathcal{H}_{ext}.
		$$
		This means that the operator
		$Y_t=e^{Q_t/2}U(t)$ is {an isometry} operator  in the Hilbert space  $(\mathcal{H}_{ext}, \langle\cdot,\cdot\rangle_{\mathcal{L}})$.
		{Moreover, given that
			$U(t)$ is unitary in the Krein space
			$(\mathcal{H}_{ext}, [\cdot,\cdot])$ and that
			$e^{Q_t/2}$  has a bounded inverse, we conclude that
			$Y$ maps $\mathcal{H}_{ext}$
			onto itself. Therefore, $Y_t$ is a unitary operator  in the Hilbert space  $(\mathcal{H}_{ext}, \langle\cdot,\cdot\rangle_{\mathcal{L}})$}.

		Additionally, referring back to
		(\ref{uf20}),
		$$
		J_{\mathcal{L}}Y_t=e^{-Q_t/2}J_{\mathcal{L}}U(t)=e^{-Q_t/2}J_{\mathcal{L}}J_tU(t)J_{\mathcal{L}}=e^{-Q_t/2}e^{Q_t}U(t)J_{\mathcal{L}}=e^{Q_t/2}U(t)J_{\mathcal{L}}=Y_tJ_{\mathcal{L}}.
		$$
		that completes the proof. 
	\end{proof}
	
	\subsection{The choice of a proper definite inner product.}
	
	It is crucial to emphasize that selecting the appropriate decomposition (\ref{uf2}) of $\mathcal{H}$ is principal to our research. This choice, in conjunction with the indefinite inner product $[\cdot, \cdot]$, determines the definite inner product $\langle\cdot, \cdot\rangle_{\mathcal{L}}$ and, consequently, the Hilbert space $(\mathcal{H}_{ext}, \langle\cdot, \cdot\rangle_{\mathcal{L}})$.
	What would happen if one chose a different decomposition: $\mathcal{H}={\mathcal{L}}_+[\dot{+}]{\mathcal{L}}_-$ onto positive ${\mathcal{L}}_+$ and negative
	${\mathcal{L}}_-$ subspaces
	than the one in (\ref{uf2})? In this case, a new family of fundamental symmetry operators
	${J}_t$ associated with (\ref{uf15}) would be obtained and
	several possibilities arise.
	
	We start by considering a worst-case scenario, assuming that at least one of the fundamental symmetries $J_t$ $(t>0)$ is an \emph{unbounded} operator in the pre-Hilbert space $(\mathcal{H}, \langle\cdot, \cdot\rangle_{{\mathcal{L}}})$.
	Then, in view of Theorem \ref{ups131b} and Lemma \ref{new1}, the one-parameter group $U$ will include at least one unbounded operator in the pre-Hilbert space
	$(\mathcal{H}, \langle\cdot, \cdot\rangle_{{\mathcal{L}}})$.
	This outcome suggests that the chosen decomposition
	$\mathcal{H}={\mathcal{L}}_+[\dot{+}]{\mathcal{L}}_-$
	may not have been appropriate for the construction of one-parametric group of bounded operators in $(\mathcal{H}_{ext}, \langle\cdot, \cdot\rangle_{\mathcal{L}})$.

	If operators ${J}_t$ $(t>0)$ are \emph{bounded} in  $(\mathcal{H}, \langle\cdot, \cdot\rangle_{{\mathcal{L}}})$,
	then the operator group $U$ can be realized as a group of bounded operators in the Hilbert space
	$(\mathcal{H}_{ext}, \langle\cdot, \cdot\rangle_{{\mathcal{L}}})$. This specific case was discussed in Subsection \ref{aa1}.
	
	The following result {deals with a significant subclass of uniformly bounded operators}  $J_t$, which allows the group $U_{ext}$ to be realized as a group of unitary operators in a Hilbert space $\mathcal{H}_{ext}$.
	
	\begin{thm}\label{uf53}
		Let the decomposition (\ref{uf2}) of $\mathcal{H}$ be chosen in such a way that the operators of fundamental symmetry $J_t$  are \emph{uniformly bounded} in the pre-Hilbert space $(\mathcal{H}, \langle\cdot, \cdot\rangle_{\mathcal{L}})$, i.e., there exists $c>0$ such that
		\begin{equation}\label{uman8}
			\|J_tf\|_{\mathcal{L}}<c\|f\|_{\mathcal{L}}, \qquad t>0, \quad  f\in\mathcal{H}.
		\end{equation}
		Then there exists a decomposition
		$\mathcal{H}=\mathcal{M}_+[\dot{+}]\mathcal{M}_-$
		as in (\ref{uf2b}), where the subspaces
		$\mathcal{M}_\pm$
		are invariant under the operators of the group $U$. The norm $\|\cdot\|_{\mathcal{M}}$
		defined by (\ref{uf2b}) is equivalent to the norm $\|\cdot\|_{\mathcal{L}}$
		on $\mathcal{H}$, and the completion of the operators from
		$U$ in the Hilbert space $(\mathcal{H}_{ext}, \langle\cdot, \cdot\rangle_{\mathcal{M}})$
		results in a group of unitary operators $U_{ext}$.
	\end{thm}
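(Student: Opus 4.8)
The plan is to manufacture, inside the completion $\mathcal{H}_{ext}$, a single fundamental symmetry $J_{\mathcal{M}}$ that commutes with the whole group $U$, and then to read off from it the decomposition $\mathcal{H}=\mathcal{M}_+[\dot{+}]\mathcal{M}_-$ and the inner product $\langle\cdot,\cdot\rangle_{\mathcal{M}}$. The construction proceeds by an averaging over the amenable group $(\mathbb{R},+)$, followed by extraction of a genuine fundamental symmetry from a Gram operator.

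\emph{Step 1: the group $U$ is uniformly bounded in $\|\cdot\|_{\mathcal{L}}$.} Under hypothesis (\ref{uman8}) each $J_t$ ($t>0$) is bounded in $(\mathcal{H},\langle\cdot,\cdot\rangle_{\mathcal{L}})$, so the conditions of Corollary~\ref{uf64b} hold and, as recalled just before Proposition~\ref{uf50}, every $U(t)$ extends to a bounded invertible operator on $\mathcal{H}_{ext}$ which is unitary in the Krein space $(\mathcal{H}_{ext},[\cdot,\cdot])$. Writing, via Proposition~\ref{uf50}, $U(t)=e^{-Q_t/2}Y_t$ with $Y_t$ unitary and $Q_t$ bounded self-adjoint in $(\mathcal{H}_{ext},\langle\cdot,\cdot\rangle_{\mathcal{L}})$ with $J_{\mathcal{L}}Q_t=-Q_tJ_{\mathcal{L}}$, the anticommutation forces $\sigma(Q_t)$ to be symmetric about $0$, so $\|e^{\pm Q_t/2}\|_{\mathcal{L}}=e^{\|Q_t\|_{\mathcal{L}}/2}$ and $\|e^{Q_t}\|_{\mathcal{L}}=e^{\|Q_t\|_{\mathcal{L}}}$; since (from the proof of Proposition~\ref{uf50}) $J_{\mathcal{L}}J_t=e^{Q_t}$ and $\|J_t\|_{\mathcal{L}}<c$ by (\ref{uman8}), we get $\|Q_t\|_{\mathcal{L}}\le\ln c=:M$ for all $t>0$, hence $\|U(\pm t)\|_{\mathcal{L}}\le e^{M/2}$ for all $t$, i.e. $\sup_{t\in\mathbb{R}}\|U(t)\|_{\mathcal{L}}<\infty$.

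\emph{Step 2: an equivalent $U$-invariant inner product.} As $(\mathbb{R},+)$ is amenable, fix a translation-invariant mean $\mathfrak{m}$ on $\ell^\infty(\mathbb{R})$ and set $\langle f,g\rangle_{\mathrm{av}}:=\mathfrak{m}_s\big(\langle U(s)f,U(s)g\rangle_{\mathcal{L}}\big)$ for $f,g\in\mathcal{H}_{ext}$; by Step~1 the function of $s$ being averaged is bounded. One checks that $\langle\cdot,\cdot\rangle_{\mathrm{av}}$ is a Hermitian sesquilinear form with $e^{-M}\|f\|_{\mathcal{L}}^2\le\langle f,f\rangle_{\mathrm{av}}\le e^{M}\|f\|_{\mathcal{L}}^2$ (using $\|U(s)^{\pm1}\|_{\mathcal{L}}\le e^{M/2}$) and $\langle U(t)f,U(t)g\rangle_{\mathrm{av}}=\langle f,g\rangle_{\mathrm{av}}$ for all $t$ (translation-invariance of $\mathfrak{m}$). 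Thus $\langle\cdot,\cdot\rangle_{\mathrm{av}}$ is a definite inner product on $\mathcal{H}_{ext}$ equivalent to $\langle\cdot,\cdot\rangle_{\mathcal{L}}$, so $(\mathcal{H}_{ext},\langle\cdot,\cdot\rangle_{\mathrm{av}})$ is again a Hilbert space; let $G$ be its Gram operator relative to $\langle\cdot,\cdot\rangle_{\mathcal{L}}$, i.e. $G$ bounded, positive, invertible, $\langle f,g\rangle_{\mathrm{av}}=\langle Gf,g\rangle_{\mathcal{L}}$.

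\emph{Step 3: from $[\cdot,\cdot]$ to a fundamental symmetry.} The indefinite inner product is $U$-invariant by (\ref{uf12}). Representing it in $(\mathcal{H}_{ext},\langle\cdot,\cdot\rangle_{\mathrm{av}})$ via $[f,g]=\langle J_{\mathcal{L}}f,g\rangle_{\mathcal{L}}=\langle G^{-1}J_{\mathcal{L}}f,g\rangle_{\mathrm{av}}$, we obtain $B:=G^{-1}J_{\mathcal{L}}$, which is bounded, invertible, and self-adjoint in $(\mathcal{H}_{ext},\langle\cdot,\cdot\rangle_{\mathrm{av}})$ (the last property is just $[f,g]=\overline{[g,f]}$). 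Since $[\cdot,\cdot]$ and $\langle\cdot,\cdot\rangle_{\mathrm{av}}$ are both $U$-invariant and each $U(t)$ is surjective, $B$ commutes with every $U(t)$. As $0\notin\sigma(B)$, put $J_{\mathcal{M}}:=\operatorname{sgn}(B)$ by the functional calculus: then $J_{\mathcal{M}}^2=I$, $J_{\mathcal{M}}$ commutes with each $U(t)$, and $[J_{\mathcal{M}}f,g]=\langle B\operatorname{sgn}(B)f,g\rangle_{\mathrm{av}}=\langle|B|f,g\rangle_{\mathrm{av}}=[f,J_{\mathcal{M}}g]$ with $\varepsilon\langle f,f\rangle_{\mathrm{av}}\le\langle|B|f,f\rangle_{\mathrm{av}}\le\|B\|\,\langle f,f\rangle_{\mathrm{av}}$, where $\varepsilon=\operatorname{dist}(0,\sigma(B))>0$. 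Hence $J_{\mathcal{M}}$ is a bounded fundamental symmetry of the Krein space $(\mathcal{H}_{ext},[\cdot,\cdot])$; the decomposition $\mathcal{H}_{ext}=\mathcal{M}_+^{ext}[\dot{+}]\mathcal{M}_-^{ext}$ with $\mathcal{M}_\pm^{ext}=\ker(I\mp J_{\mathcal{M}})$ has $U$-invariant components (since $J_{\mathcal{M}}$ commutes with $U$), $\langle f,g\rangle_{\mathcal{M}}:=[J_{\mathcal{M}}f,g]=\langle|B|f,g\rangle_{\mathrm{av}}$ is a definite inner product equivalent to $\langle\cdot,\cdot\rangle_{\mathcal{L}}$, and $\langle U(t)f,U(t)g\rangle_{\mathcal{M}}=\langle|B|U(t)f,U(t)g\rangle_{\mathrm{av}}=\langle|B|f,g\rangle_{\mathrm{av}}=\langle f,g\rangle_{\mathcal{M}}$ (using $|B|U(t)=U(t)|B|$ and Step~2), so — being also bijective — each $U(t)$ is unitary in $(\mathcal{H}_{ext},\langle\cdot,\cdot\rangle_{\mathcal{M}})$; this is the group $U_{ext}$ of the statement.

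\emph{Step 4: descent to $\mathcal{H}$, and the main obstacle.} It remains to produce the decomposition at the level of the original (generally incomplete) space: one takes $\mathcal{M}_\pm:=\mathcal{M}_\pm^{ext}\cap\mathcal{H}$, and the only nontrivial point is $\mathcal{H}=\mathcal{M}_+\,\dot{+}\,\mathcal{M}_-$, i.e. $J_{\mathcal{M}}\mathcal{H}\subseteq\mathcal{H}$. I expect this to be the crux. One has $J_{\mathcal{M}}=\operatorname{sgn}(B)$ with $B^{-1}$ a weak mean of the operators $J_s=U(s)J_{\mathcal{L}}U(s)^{-1}$, each of which maps $\mathcal{H}$ into $\mathcal{H}$; but neither the mean nor the sign-calculus manifestly stays inside $\mathcal{H}$, both being limiting procedures while $\mathcal{H}$ is only dense, not closed, in $\mathcal{H}_{ext}$. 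I would address this by realizing $\mathfrak{m}$ as a weak-$*$ limit of finitely supported averages $\sum_i\lambda_i(\cdot)(s_i)$ ($\lambda_i\ge0$, $\sum_i\lambda_i=1$), whose operator counterparts $\sum_i\lambda_iJ_{s_i}$ do preserve $\mathcal{H}$, and then exploiting the uniform control from Step~1 together with the intertwining $J_tU(t)=U(t)J_{\mathcal{L}}$ of (\ref{uf20}) to transfer the invariant decomposition back to $\mathcal{H}$; the absence of any continuity hypothesis on $U$ — the reason an invariant mean, rather than the generator of $U$, is used at all — is only a cosmetic complication. Once $J_{\mathcal{M}}\mathcal{H}\subseteq\mathcal{H}$ is secured, $\mathcal{M}_\pm$ are the required positive/negative $U$-invariant subspaces of $\mathcal{H}$, $\|\cdot\|_{\mathcal{M}}$ is equivalent to $\|\cdot\|_{\mathcal{L}}$ on $\mathcal{H}$ by Step~3, and completing the $U(t)$'s in $(\mathcal{H}_{ext},\langle\cdot,\cdot\rangle_{\mathcal{M}})$ returns the unitary group $U_{ext}$ of Step~3, which finishes the proof.
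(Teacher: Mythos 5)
Your Steps 1--3 are correct and, for the heart of the argument, you take a genuinely different route from the paper. After establishing $\sup_{t\in\mathbb{R}}\|U(t)\|_{\mathcal{L}}<\infty$ exactly as you do (the paper also uses $J_{\mathcal{L}}J_t=e^{Q_t}$ and the bound $\|J_t\|_{\mathcal{L}}<c$ to get $\|U(t)\|_{\mathcal{L}}\leq\sqrt{c}$, handling $t<0$ via $U(-t)=J_{\mathcal{L}}U^\dag(t)J_{\mathcal{L}}$), the paper simply invokes Theorem 5.18 of Chapter II of Azizov--Iokhvidov to obtain a $U_{ext}$-invariant fundamental decomposition of the Krein space $\mathcal{H}_{ext}$. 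You instead reprove that theorem: the invariant mean over the amenable group $\mathbb{R}$ yields an equivalent $U$-invariant inner product $\langle\cdot,\cdot\rangle_{\mathrm{av}}$, the Gram operator $B$ of $[\cdot,\cdot]$ relative to it commutes with the group, and $J_{\mathcal{M}}=\mathrm{sgn}(B)$ is the desired commuting fundamental symmetry. This is the standard Phillips-type proof of the cited result; what you buy is self-containedness, at the cost of length. Everything you claim at the level of $\mathcal{H}_{ext}$ --- the invariant decomposition, the equivalence of $\|\cdot\|_{\mathcal{M}}$ and $\|\cdot\|_{\mathcal{L}}$, and the unitarity of $U_{ext}$ in $(\mathcal{H}_{ext},\langle\cdot,\cdot\rangle_{\mathcal{M}})$ --- is correctly established.

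The one genuine gap is the one you flag yourself in Step 4: the theorem asserts a decomposition of $\mathcal{H}$ itself, i.e.\ $\mathcal{H}=\mathcal{M}_+[\dot{+}]\mathcal{M}_-$ with $\mathcal{M}_\pm=\mathcal{M}_\pm^{ext}\cap\mathcal{H}$, which amounts to $J_{\mathcal{M}}\mathcal{H}\subseteq\mathcal{H}$, and you do not prove this. Your proposed repair does not close it as stated: $B^{-1}=\mathfrak{m}_s(J_{-s})$ is only a weak-$*$ limit of convex combinations $\sum_i\lambda_iJ_{s_i}$, and $J_{\mathcal{M}}$ is then obtained by a further inversion and a sign functional calculus; none of these limiting operations preserves a merely dense, non-closed subspace in general, and the uniform bounds from Step 1 give no control of membership in $\mathcal{H}$. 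You should know that the paper's own proof is no stronger here: it asserts in a single sentence that the subspaces $\mathcal{M}_\pm^{ext}\cap\mathcal{H}$ decompose $\mathcal{H}$, without argument, so you have correctly located the weak point of the theorem rather than introduced a new one. An honest completion would either add a hypothesis guaranteeing $J_{\mathcal{M}}\mathcal{H}\subseteq\mathcal{H}$, or restate the final conclusion at the level of $\mathcal{H}_{ext}$ only --- which is all that is used in the paper's examples, where $J_t=J_{\mathcal{L}}$ for every $t$ and one may take $\mathcal{M}_\pm=\mathcal{L}_\pm$ outright.
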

	\begin{proof}
		Corollary \ref{uf64b} and (\ref{uman8}) imply that the group $U$ can be extended by continuity to
		the  group $U_{ext}=\{U(t) : t\in\mathbb{R}\}$
		of bounded operators  in the Hilbert space $(\mathcal{H}_{ext}, \langle\cdot, \cdot\rangle_{\mathcal{L}})$. Each of these operators $U(t)$ has the form (\ref{ups47}), where
		the operator $Q_t$ is defined by the relation $J_tJ_{\mathcal{L}}=e^{-Q_t}$. In view of
		(\ref{uman8}),
		$$
		\|e^{-Q_t}f\|_{\mathcal{L}}=\|J_tJ_{\mathcal{L}}f\|_{\mathcal{L}}<c\|J_{\mathcal{L}}f\|_{\mathcal{L}}=c\|f\|_{\mathcal{L}}, \qquad f\in\mathcal{H}, \quad t>0.
		$$
		Therefore, $\|e^{-Q_t}\|_{\mathcal{L}}\leq{c}$ and
		$$
		\|U(t)f\|_{\mathcal{L}}=\|e^{-Q_t/2}Y_tf\|_{\mathcal{L}}\leq\|e^{-Q_t/2}\|_{\mathcal{L}}\|Y_tf\|_{\mathcal{L}}\leq\sqrt{c}\|Y_tf\|_{\mathcal{L}}=\sqrt{c}\|f\|_{\mathcal{L}}, \qquad f\in\mathcal{H}, \quad t>0.
		$$
		This means that the norms of $U(t)$ in $(\mathcal{H}_{ext}, \langle\cdot, \cdot\rangle_{\mathcal{L}})$ are uniformly bounded for $t>0$, i.e., $\|U(t)\|_{\mathcal{L}}\leq\sqrt{c}$. Since, for all $t\in\mathbb{R}$, the operator  $U(t)$ is  unitary in the Krein space $(\mathcal{H}_{ext}, [\cdot,\cdot])$, the following relation holds
		$$
		U(-t)=U^{-1}(t)=J_{\mathcal{L}}U^\dag(t)J_{\mathcal{L}}, \qquad t>0,
		$$
		where $U^\dag(t)$ is the adjoint operator of $U(t)$ in the Hilbert space $(\mathcal{H}_{ext}, \langle\cdot, \cdot\rangle_{\mathcal{L}})$.
		Hence, $\|U(-t)\|_{\mathcal{L}}=\|J_{\mathcal{L}}U^\dag(t)J_{\mathcal{L}}\|_{\mathcal{L}}\leq\|U^\dag(t)\|_{\mathcal{L}}=\|U(t)\|_{\mathcal{L}}$ for $t>0$. The obtained relation means that the operators $U(t)$, $t\in\mathbb{R}$ are uniformly bounded in $(\mathcal{H}_{ext}, \langle\cdot, \cdot\rangle_{\mathcal{L}})$.
		In this case (see \cite[Theorem 5.18, Chapter II]{AK_Azizov}), there exists a decomposition
		\begin{equation}\label{uman9}
			\mathcal{H}_{ext}={\mathcal{M}}_+^{ext}[\dot{+}]{\mathcal{M}}_-^{ext}
		\end{equation}
		onto orthogonal (with respect to $[\cdot,\cdot]$) sum of positive/negative subspaces  ${\mathcal{M}}_\pm^{ext}$ that is invariant with respect to the group  $U_{ext}$, i.e., $U(t) : {\mathcal{M}}_+^{ext} \to {\mathcal{M}}_+^{ext}$ and $U(t) : {\mathcal{M}}_-^{ext} \to {\mathcal{M}}_-^{ext}$ for all $t\in\mathbb{R}$. Since the operators $U(t)$ are unitary in the Krein space $(\mathcal{H}_{ext}, [\cdot,\cdot])$, their restriction to ${\mathcal{M}}_+^{ext}$ and ${\mathcal{M}}_-^{ext}$ results in unitary operators acting within the Hilbert spaces
		$({\mathcal{M}}_+^{ext}, [\cdot, \cdot])$ and $({\mathcal{M}}_-^{ext}, -[\cdot, \cdot])$, respectively.
		
		Consider a new definite inner product on $\mathcal{H}_{ext}$, defined as $\langle\cdot,\cdot\rangle_{{\mathcal{M}}}=[J_{\mathcal{M}}\cdot,\cdot]$, where $J_{\mathcal{M}}$ is an operator of fundamental symmetry associated with the decomposition (\ref{uman9}).
		The norms $\|\cdot\|_{\mathcal{L}}$ and $\|\cdot\|_{\mathcal{M}}$ are equivalent on $\mathcal{H}_{ext}. $  Furthermore, the Hilbert spaces
		$({\mathcal{M}}_+^{ext}, [\cdot, \cdot])$ and $({\mathcal{M}}_-^{ext}, -[\cdot, \cdot])$ considered above coincide with the orthogonal subspaces $({\mathcal{M}}_+^{ext}, \|\cdot\|_{\mathcal{M}})$ and $({\mathcal{M}}_-^{ext}, \|\cdot\|_{\mathcal{M}})$ of the Hilbert space $(\mathcal{H}_{ext}, \langle\cdot, \cdot\rangle_{\mathcal{M}})$. This means that each operator $U(t)\in{U}_{ext}$ is the orthogonal sum of two unitary operators acting in $({\mathcal{M}}_+^{ext}, \|\cdot\|_{\mathcal{M}})$ and $({\mathcal{M}}_-^{ext}, \|\cdot\|_{\mathcal{M}})$, respectively. Hence, $U_{ext}$ is a group of unitary operators in the Hilbert space $(\mathcal{H}_{ext}, \langle\cdot, \cdot\rangle_{\mathcal{M}})$.
		
		To complete the proof, we note that the subspaces ${\mathcal{M}}_\pm={\mathcal{M}}_\pm^{ext}\cap\mathcal{H}$  are positive and negative, respectively, and that the decomposition
		(\ref{uf2b}) holds.
		Furthermore, these subspaces
		are  invariant under the action of $U(t)$ since $U(t)$ are fully  invertible operators on  $\mathcal{H}$.
	\end{proof}
	
	The following result is a direct consequence of Theorem \ref{uf53}
	\begin{cor}
		Let the conditions of Theorem \ref{uf53} be satisfied, and let
		$H$ be the infinitesimal generator of the group
		$U_{ext}$. Then
		$iH$ is a self-adjoint operator in the Hilbert space
		$(\mathcal{H}_{ext},
		\langle\cdot, \cdot \rangle_{\mathcal{M}})$.
	\end{cor}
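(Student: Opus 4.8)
The plan is to read the statement off Stone's theorem, applied to the group $U_{ext}$ in the Hilbert space $(\mathcal{H}_{ext},\langle\cdot,\cdot\rangle_{\mathcal{M}})$ produced by Theorem \ref{uf53}.

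First I would assemble what Theorem \ref{uf53} already delivers. Under its hypotheses the norms $\|\cdot\|_{\mathcal{L}}$ and $\|\cdot\|_{\mathcal{M}}$ are equivalent on $\mathcal{H}$, hence on the completion $\mathcal{H}_{ext}$, so $(\mathcal{H}_{ext},\langle\cdot,\cdot\rangle_{\mathcal{L}})$ and $(\mathcal{H}_{ext},\langle\cdot,\cdot\rangle_{\mathcal{M}})$ are the same topological vector space; moreover $U_{ext}=\{U(t):t\in\mathbb{R}\}$ is a one-parameter group of operators that are unitary in the Hilbert space $(\mathcal{H}_{ext},\langle\cdot,\cdot\rangle_{\mathcal{M}})$. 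Since the two Hilbert norms are equivalent, strong continuity of $t\mapsto U(t)$ is insensitive to which of them is used, and the infinitesimal generator — defined as the strong limit $Hf=\lim_{t\to0}t^{-1}(U(t)f-f)$ on the set where this limit exists — is the \emph{same} operator, with the \emph{same} domain, in both Hilbert structures. Thus the operator $H$ named in the statement is unambiguous, and positing its existence is exactly the assertion that $U_{ext}$ is a $C_0$-group (and in mild situations — e.g.\ if $\mathcal{H}_{ext}$ is separable — even weak measurability of a one-parameter unitary group already forces strong continuity, so this costs essentially nothing).

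Then I would invoke Stone's theorem in $(\mathcal{H}_{ext},\langle\cdot,\cdot\rangle_{\mathcal{M}})$: the strongly continuous one-parameter unitary group $\{U(t)\}_{t\in\mathbb{R}}$ equals $\{e^{itA}:t\in\mathbb{R}\}$ for a unique self-adjoint operator $A$ on $(\mathcal{H}_{ext},\langle\cdot,\cdot\rangle_{\mathcal{M}})$; differentiating at $t=0$ identifies the generator as $H=iA$ with $\mathrm{dom}(H)=\mathrm{dom}(A)$, whence $iH=-A$. Since $-1$ times a self-adjoint operator is again self-adjoint, $iH=-A$ is self-adjoint in $(\mathcal{H}_{ext},\langle\cdot,\cdot\rangle_{\mathcal{M}})$, which is the claim; equivalently, $H$ is skew-adjoint, $U(t)=e^{tH}$, and $iH$ plays the role of the self-adjoint Hamiltonian of the $\mathcal{CPT}$-type picture described in the introduction. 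The ``hard part'' here is only conceptual: essentially everything analytic is already contained in Theorem \ref{uf53} (equivalence of $\|\cdot\|_{\mathcal{L}}$ and $\|\cdot\|_{\mathcal{M}}$ and unitarity of $U_{ext}$ in $\langle\cdot,\cdot\rangle_{\mathcal{M}}$) and in Stone's theorem, and the only point wanting a word of care is the implicit strong-continuity hypothesis needed to apply Stone — handled above — together with keeping track of the factor $i$ in the chosen convention for the generator.
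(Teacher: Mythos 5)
Your proposal is correct and follows the same route the paper intends: the paper offers no written proof, simply declaring the corollary ``a direct consequence of Theorem \ref{uf53}'', which amounts to applying Stone's theorem to the unitary group $U_{ext}$ in $(\mathcal{H}_{ext},\langle\cdot,\cdot\rangle_{\mathcal{M}})$ exactly as you do. Your explicit attention to the strong-continuity hypothesis and to the sign convention $U(t)=e^{tH}$ (consistent with the paper's definition $Hf=\lim_{t\to+0}t^{-1}(W(t)-I)f$ in the Conclusions) is a point the paper leaves implicit, and is welcome.
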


	\section{Examples}
	
	This section is entirely devoted to some concrete realizations of the abstract framework described in the previous sections.
	
	\subsection{A dilation group.}\label{s4.1}
	Consider a one-parameter group of operators
	$$
	(U(t)f)(x)=e^{t/2}f(e^tx), \quad  t\in\mathbb{R}
	$$
	acting in  an indefinite inner product space $\mathcal{H}=\mathcal{S}(\mathbb{R})$ with $\mathcal{PT}$ inner product
	(\ref{uman4}) (see Example \ref{uman10} for details).
	
	The space
	${\mathcal{H}}$ is  decomposable and the
	positive $\mathcal{L}_+$ and the negative $\mathcal{L}_-$ subspaces in the decomposition (\ref{uf2}) of $\mathcal{H}$ can be chosen as   the subspaces of even functions (for $\mathcal{L}_+$) and odd functions (for $\mathcal{L}_-$).
	The operator of fundamental symmetry $J_{\mathcal{L}}$ associated with  (\ref{uf2}) coincides with
	the space parity operator $\mathcal{P}f(x)=f(-x)$ and the definite inner product $\langle\cdot, \cdot\rangle_{\mathcal{L}}=[{\mathcal{P}}\cdot, \cdot]$ is the ordinary $L_2$-inner product
	$\langle{f}, g\rangle_{\mathcal{L}}=\int_{\mathbb{R}}\overline{f(x)}{g(x)}dx$.

	The subspaces $\mathcal{L}_{\pm}$ are invariant with respect to $U(t)$.  Therefore the subspaces $\mathcal{L}_\pm^t$ in the decomposition (\ref{uf15}) coincide with $\mathcal{L}_\pm$ and $J_{t}=\mathcal{P}$ for all $t>0$. Hence the operators $J_t$ are uniformly bounded and the subspaces $\mathcal{M}_{\pm}$ indicated in Theorem \ref{uf53} coincide with $\mathcal{L}_{\pm}$. The latter means that
	the definite inner product
	$\langle\cdot,\cdot\rangle_{{\mathcal{M}}}$
	coincides with the ordinary $L_2$-inner product and $(\mathcal{H}_{ext}, \langle\cdot, \cdot\rangle_{\mathcal{M}})=L_2(\mathbb{R})$.
	
	To summarize, extending the operators
	$U(t)$ initially defined on $\mathcal{S}(\mathbb{R})$
	by continuity onto
	$L_2(\mathbb{R})$  results in the formation of a group
	$U_{ext}$
	of unitary operators acting on $L_2(\mathbb{R})$. The infinitesimal generator of $U_{ext}$
	is a self-adjoint operator in $L_2(\mathbb{R})$ and it coincides with  the Berry-Keating Hamiltonian $H_{BK}=\frac{-1}{2}(XP+PX)$, where $X$ and $P$ are usual self-adjoint position and momentum operators
	\cite{BK}, \cite[p.223]{T}.

	\subsection{Generalized quantum harmonic oscillator.}\label{4.2}
	Consider a  linear space
	$
	\mathcal{H}=\{f(x)=P(x)e^{-x^2/2} , \ x\in\mathbb{R}\},
	$
	where $P(x)$ is an arbitrary polynomial.
	The space ${\mathcal{H}}$ endowed with $\mathcal{PT}$ inner product (\ref{uman4}) is decomposable. Similar to the previous example,  the
	subspaces $\mathcal{L}_\pm$ in  (\ref{uf2})  can be chosen as  the subspaces of even and odd functions.
	The operator of fundamental symmetry $J_{\mathcal{L}}$  coincides with
	the space parity operator $\mathcal{P}$ and  $\langle{f}, g\rangle_{\mathcal{L}}=\int_{\mathbb{R}}\overline{f(x)}{g(x)}dx$. Notice that the linear space $\mathcal{H}$ we are considering here is just a subspace of the one used in the previous example, $\mathcal{S}(\mathbb{R})$, and therefore it is not a surprise that some of the preliminaries above are repeated here.
	
	The  Hermite functions \begin{equation}\label{aaa1}
		f_n(x)=\frac{1}{\sqrt{2^nn!\sqrt{\pi}}}H_n(x)e^{-x^2/2},  \quad H_n(x)=e^{x^2/2}(x-\frac{d}{dx})^ne^{-x^2/2}
	\end{equation}
 {are elements of the space $\mathcal{H}$, satisfy the relation $\mathcal{P}f_n=(-1)^nf_n$, and form an orthogonal sequence with respect to the inner product $[\cdot, \cdot]$, such that}
	$[f_n, f_n]=\langle\mathcal{P}f_n, f_n\rangle_{\mathcal{L}}={(-1)^n\langle{f}_n, f_n\rangle_{\mathcal{L}}}=(-1)^n.$
	Furthermore,
	$\mathcal{H}={\sf span}\ \{f_{n}\}_{n=0}^\infty.$ This means that the formula
	$$
	W(t)f_n=e^{\lambda_nt}f_n, \qquad \lambda_n\in\mathbb{C}, \quad t\geq{0},
	\quad n\in\mathbb{N}\cup\{0\}.
	$$
	determines a one-parameter semi-group $W_+$ of linear operators on $\mathcal{H}$.
	
	{According to Example \ref{uman63}, the operators $W(t)$ map $\mathcal{F}_{++}$ onto itself in a one-to-one manner if and only if the condition (\ref{uman1963}) holds. In this case (see again Example \ref{uman63}) }  
	the formula
	$$
	U(t)f_n=e^{i{\sf Im} \lambda_nt}f_n, \quad t\in\mathbb{R}.
	$$
	defines a group of unitary operators on $\mathcal{H}$ with respect to $[\cdot, \cdot]$.

	The subspaces $\mathcal{L}_\pm$ introduced above can be characterized as follows:
	$\mathcal{L}_+={\sf span}\ \{f_{2k}\}_{k=0}^\infty$ and $\mathcal{L}_-={\sf span}\ \{f_{2k+1}\}_{k=0}^\infty$. This means that $\mathcal{L}_{\pm}$ are invariant with respect to $U(t)$.
	Similar to the previous example, it can be stated that
	$\mathcal{L}_{\pm}=\mathcal{L}_{\pm}^t$ and $J_{t}=\mathcal{P}$ for all $t>0$. Hence the operators $J_t$ are uniformly bounded and the subspaces $\mathcal{M}_{\pm}$ indicated in Theorem \ref{uf53} coincide with $\mathcal{L}_{\pm}$. This means that
	the Hilbert space $(\mathcal{H}_{ext}, \langle\cdot, \cdot\rangle_{\mathcal{M}})$ coincides with
	$L_2(\mathbb{R})$ and  the completion of the operators $U(t)$  in $L_2(\mathbb{R})$
	results in the formation of a group $U_{ext}$ of unitary operators acting on $L_2(\mathbb{R})$. This group coincides with the group generated by  the  harmonic oscillator
	$H=-\frac{d^2}{dx^2}+x^2$ when
	$\lambda_n=i(2n+1)$, since $e^{iHt}f_n=e^{i(2n+1)t}f_n$, $\forall n\geq0$.
	
	\vspace{2mm}
	
	{\bf Remark:--} It is not hard to generalize this example to other cases: if we consider a self-adjoint Hamiltonian with even potential we can look for odd or even eigenstates. If the set of these eigenstates is an orthonormal basis for a certain Hilbert space $\mathcal{H}$, we can repeat a similar construction as above, using the same $\mathcal{PT}$ inner product (\ref{uman4}). This is what happens, for instance, for a particle in a finite potential well, defined on ${{L}}_2(-a,a)$, $a>0$, \cite{merz}.

\subsection{Quantum harmonic oscillator shifted in the complex plain.}\label{4.3a}
Since the Hermite functions (\ref{aaa1}) are entire functions on $\mathbb{C}$, their complex shift can be defined:
$$
 \phi_n(x)=f_n(x+ia),  \qquad  a\in\mathbb{R}\setminus\{0\}, \quad n=0,1,2,\ldots
$$

Consider  a  linear space  
$
\mathcal{H}={\sf span} \{\phi_n\}_{n=0}^\infty
$ 
endowed with $\mathcal{PT}$ inner product $[\cdot, \cdot]$, see (\ref{uman4}). The sequence $\{\phi_n\}_{n=0}^\infty$ is orthogonal with respect to $[\cdot, \cdot]$ and $[\phi_n, \phi_n]=(-1)^n$ \cite{Mit}.

Similarly to subsection \ref{4.2} it can be established that the formula
$$
U(t)\phi_n=e^{i{\sf Im} \lambda_nt}\phi_n, \qquad \lambda_n\in\mathbb{C}, \quad t\in\mathbb{R}.
	$$
	defines a group $U$ of unitary operators on $\mathcal{H}$ with respect to the $\mathcal{PT}$ inner product (\ref{uman4}). 

Next, we should aim to identify a suitable inner product that allows us to interpret 
$U$
 as a set of unitary operators in a Hilbert space.
 To achieve this, we need to find an appropriate decomposition (\ref{uf2}) of 
$\mathcal{H}$
 into the positive and negative subspaces. In contrast to subsection \ref{4.2}, these subspaces 
$\mathcal{L}_\pm$
  cannot be chosen as the subspaces of even and odd functions, since the even and odd components of 
$\phi_n$  do not belong to 
$\mathcal{H}$. This implies that the standard 
$L_2$-inner product
$\langle{f}, g\rangle_{L_2}=\int_{\mathbb{R}}\overline{f(x)}{g(x)}dx$
on $\mathcal{H}$ \emph{cannot be determined} by the $\mathcal{PT}$-inner product (\ref{uman4}) using the methods developed in subsection \ref{s3.1}.

A natural choice for $\mathcal{L}_\pm$
arises from the observation that $[\phi_n, \phi_m]=(-1)^n\delta_{nm}$.
 Following this approach, we select the decomposition (\ref{uf2}) of 
$\mathcal{H}$ into positive and negative subspaces: $\mathcal{L}_+={\sf span}\ \{\phi_{2k}\}_{k=0}^\infty$ and $\mathcal{L}_-={\sf span}\ \{\phi_{2k+1}\}_{k=0}^\infty$. 

Consider a Hilbert space $(\mathcal{H}_{ext}, \langle\cdot, \cdot\rangle_{\mathcal{L}})$,  where the inner product $\langle\cdot, \cdot\rangle_{\mathcal{L}}$
is defined by (\ref{uf3}) and 
$\mathcal{H}_{ext}$
 denotes the completion of $\mathcal{H}$ 
 with respect to the norm $\|\cdot\|_{\mathcal{L}}=\sqrt{\langle\cdot, \cdot\rangle_{\mathcal{L}}}$.
Similarly to the previous example, it can be shown that completing 
$U(t)$ in $(\mathcal{H}_{ext}, \langle\cdot, \cdot\rangle_{\mathcal{L}})$, results in a one-parameter group $U_{ext}$ of unitary operators.  
This group coincides with the group generated by the shifted harmonic oscillator
	$H=-\frac{d^2}{dx^2}+x^2+2iax$ when
	$\lambda_n=i(2n+1+a^2)$, since $e^{iHt}\phi_n=e^{i(2n+1+a^2)t}\phi_n$, $\forall n\geq0$.

Let us specify the inner product  $\langle\cdot, \cdot\rangle_{\mathcal{L}}$  by applying the Fourier transform $$(F\phi)(\delta)=\frac{1}{\sqrt{2\pi}}\int_{-\infty}^\infty{e^{-i\delta{x}}\phi(x)}dx$$
to $\phi_n$. Consequently, $(F\phi_n)(\delta)=e^{-a\delta}(Ff_n)(\delta)$ and $\phi_n=F^{-1}e^{-a\delta}Ff_n$. Denote 
$$
A=F^{-1}e^{-a\delta}F\mathcal{P}F^{-1}e^{a\delta}F.
$$
 Taking into account that $\mathcal{P}f_n=(-1)^nf_n$, where $\mathcal{P}$ is the space parity operator, we conclude that 
$A\phi_n=F^{-1}e^{-a\delta}F\mathcal{P}f_n=(-1)^n\phi_n$.
 Therefore, $A$ is the operator of fundamental symmetry associated with (\ref{uf2}), i.e., $J_{\mathcal{L}}=A$.
{Since 
$\mathcal{P}$ commutes with $F$, the expression for $A$ can be rewritten as $$
J_{\mathcal{L}}=A=F^{-1}e^{-a\delta}\mathcal{P}e^{a\delta}F=F^{-1}\mathcal{P}e^{2a\delta}F=\mathcal{P}F^{-1}e^{2a\delta}F.
$$}
By virtue of (\ref{bu1}), for $\phi, \psi\in\mathcal{H}$,
$$
\langle{\phi}, \psi\rangle_{\mathcal{L}}=[J_{\mathcal{L}}\phi, \psi]=\langle\mathcal{P}J_{\mathcal{L}}\phi, \psi\rangle_{{L_2}}=\langle{F}^{-1}e^{2a\delta}F\phi, \psi\rangle_{{L_2}}=\langle{e}^{2a\delta}F\phi, F\psi\rangle_{{L_2}}.
$$
This means that the norms $\|\cdot\|_{\mathcal{L}}$ and $\|\cdot\|_{L_2}$ are not equivalent on $\mathcal{H}$ and the Hilbert space $(\mathcal{H}_{ext}, \langle\cdot, \cdot\rangle_{\mathcal{L}})$ does not coincide with $L_2(\mathbb{R})$.

{\bf Remark:--} Results similar to those in subsection \ref{4.3a} can be derived by examining a broad class of generalized Riesz systems that are orthogonal with respect to an indefinite inner product  \cite{FBSK}.

	\section{Conclusions}
	The initial problem of interest was the following: can the properties of a linear operator $W$ in an indefinite inner product space be uniquely determined by its restriction to the nonlinear set $\mathcal{F}_{++}$ of positive vectors? In subsections \ref{s2.1}, \ref{s2.2}, we provide an affirmative answer to this question. In particular, we show that the bijectivity of $W$ on $\mathcal{F}_{++}$ is equivalent to it being {\em close} to a unitary operator with respect to the indefinite inner product $[\cdot,\cdot]$ (Theorem \ref{ups30}).

	Starting from subsection \ref{2.3}, we focus on one-parameter semi-groups of operators $W_+ = \{W(t) : t \geq 0\}$, where each $W(t)$ maps $\mathcal{F}_{++}$ onto itself in a one-to-one manner. Under this natural restriction, we demonstrate that the semi-group $W_+$ can be transformed into a one-parameter group $U = \{U(t) : t\in\mathbb{R}\}$ of operators that are unitary with respect to the indefinite inner product on $\mathcal{H}$. Moreover, by imposing  additional conditions {in Section \ref{3}}, we show how to construct a suitable definite inner product $\langle\cdot, \cdot\rangle$ based on the indefinite inner product $[\cdot, \cdot]$, ensuring the unitarity of the operators $U(t)$ in the Hilbert space obtained by completing $\mathcal{H}$ with respect to $\langle\cdot, \cdot\rangle$ {(Theorem \ref{uf53})}.
	
	The last part of the paper is devoted to some physically relevant examples, meant to clarify our general settings in some concrete situation.
	
In our view, the results presented in {in Section \ref{s2}} 
 warrant special attention. {For example,} Lemma \ref{uf5b} specifically implies that the discrete spectrum of an operator, corresponding to { negative eigenvectors $f$, i.e. $[f,f]<0$, is uniquely determined by its behavior on positive vectors. 
 Another interesting result, we believe, is the semi-group $W_+$  discussed in Example \ref{uman63}. Its infinitesimal generator 
$$
Hf=\lim_{t\to+0}\frac{W(t)-I}{t}f
$$ 
can
be calculated for all\footnote{Because of the specific nature of the space 
$\mathcal{H}$ in Example \ref{uman63}, we do not need to know which norm is defined on $\mathcal{H}$.} $f\in\mathcal{H}$ 
and $H$ has eigenvectors $f_n$ 
  corresponding to eigenvalues 
\begin{equation}\label{uman9b}
\lambda_n=\frac{\alpha}{2}+i{\sf Im}\lambda_n, \qquad \alpha>0.
\end{equation}
This result can be easily extended to a more general case. Consider a one-parameter semi-group 
$W_+$ acting on an indefinite inner product space 
$\mathcal{H}$ and mapping 
$\mathcal{F}_{++}$ bijectively onto itself. 
Assume that the infinitesimal generator 
$H$
of $W_+$ exists. Then, each point $\lambda_n$
in the discrete spectrum of $H$ takes the form (\ref{uman9b}), where the constant $\alpha$ does not depend on the choice of $\lambda_n$.

In other words, the bijectivity of the operators in $W_+$  on the set of positive vectors implies that the discrete spectrum of the corresponding infinitesimal generator $H$
consists of points lying along a straight line parallel to the imaginary axis. This is of course not far from what is found and discussed in connection with the non trivial zeros of the Riemann zeta function $\xi(s)$, \cite{Edwards}. This is connected, we believe, to the recent interest of suitable quantum Hamiltonians in connection with these zeros, originally proposed in \cite{BK} (see also subsection \ref{s4.1}), and then explored by many authors using ideas often strongly connected to $\mathcal{PT}$-quantum mechanics. We only cite here some recent contributions, like \cite{bbm,kalauni}, together as some critical contributions, as the paper in \cite{bellisard}, where a series of mathematical aspects to be clarified in \cite{bbm} are pointed out. 

	\section*{Acknowledgments} {S. K. expresses his gratitude to the Dipartimento di Ingegneria of the University of Palermo for its financial support via its program of Networking.} S. K.  thanks Dr. Uwe Guenther for the valuable discussions and constructive criticism. F. B. acknowledges partial support from Palermo University and from G.N.F.M. of the INdAM. F, B. also  acknowledges partial support from PRIN Project {\em  Transport phonema in low dimensional structures: models, simulations and theoretical aspects}.

\end{document}